\newtheorem{thm}{Theorem}[section]
\newtheorem{cor}[thm]{Corollary}
\newtheorem{lem}[thm]{Lemma}
\theoremstyle{definition}
\newtheorem{defn}[thm]{Definition}
\theoremstyle{remark}
\newtheorem{rem}[thm]{Remark}
\theoremstyle{example}
\newtheorem{ex}[thm]{Example}
\begin{document}

\title[Degree Sequence Index Strategy]{Degree Sequence Index Strategy}%

\author[Caro and Pepper]{\small Yair Caro$^*$ and Ryan Pepper$^{**}$\\
\tiny{$^*$Dept. of Mathematics and Physics\\
University of Haifa-Oranium\\
Tivon 36006, Israel\\
yacaro@kvgeva.org.il\\$^{**}$Dept. of Computer and Mathematical Sciences\\University of Houston -- Downtown\\Houston, Texas 77002, U.S.A.\\pepperr@uhd.edu}}

\begin{abstract}
We introduce a procedure, called the Degree Sequence Index Strategy (DSI), by which to bound graph invariants by certain indices in the ordered degree sequence. As an illustration of the DSI strategy, we show how it can be used to give new upper and lower bounds on the $k$-independence and the $k$-domination numbers. These include, among other things, a double generalization of the annihilation number, a recently introduced upper bound on the independence number. Next, we use the DSI strategy in conjunction with planarity, to generalize some results of Caro and Roddity about independence number in planar graphs. Lastly, for claw-free and $K_{1,r}$-free graphs, we use DSI to generalize some results of Faudree, Gould, Jacobson, Lesniak and Lindquester.  
\end{abstract}
\maketitle
\section{Introduction}

All graphs considered are simple and finite.  For a graph $G=(V,E)$, we will use $n=n(G)$ to denote the order, or $|V|$, and $m=m(G)$ to denote the size, or $|E|$. Moreover, we will use the notation $\Delta(G)$ and $\delta(G)$ to denote, respectively, the maximum and minimum degrees of a graph $G$. A complete graph with $r$ vertices is denoted $K_r$ and an empty graph with $r$ vertices is denoted $E_r$. If $S$ is a subset of $V$, then we use the notation $[S]$ to denote the subgraph induced by $S$. For two graphs $G$ and $H$, we use the notation $G \cup H$ to denote their union and the notation $G+H$ to denote their join (the graph obtained by joining all possible edges between $G$ and $H$). A $j$-independent set is a set $I \subseteq V$ such that $\Delta([I]) < j$. The $j$-independence number, denoted $\alpha_j(G)$, is the cardinality of a largest $j$-independent set. This generalizes the traditional independence number since $\alpha_1(G)=\alpha(G)$. A $j$-dominating set is a set $D \subseteq V$ such that each vertex in $V-D$ has at least $j$ neighbors in $D$.  The $j$-domination number, denoted $\gamma_j(G)$, is the cardinality of a smallest $j$-dominating set.  This generalizes the traditional domination number since $\gamma_1(G)=\gamma(G)$. These concepts were introduced in \cite{25,26} and both invariants have become popular research topics.  For example, $j$-independence number is studied in \cite{28,23,21,22,19}, $j$-domination is studied in \cite{29,27,20,30}, while relationships between these invariants is studied in \cite{16,24,18,17}.  In fact, the literature is so extensive that in order to see the many more articles on these topics, it would be better to consult the textbook \cite{30} and the survey article \cite{16} which collectively capture much of what is known. The degree sequence $D$ of a graph $G$, unless stated otherwise, is assumed to be in non-decreasing order and denoted; $D=D(G)=\{\delta=d_1 \leq d_2 \leq \ldots \leq d_n=\Delta\}$. 

The goal of this paper is to introduce a general method by which to constrain NP-hard graph invariants, such as the independence and domination numbers, by use of the degree sequence.  In particular, we will show how certain indices of the ordered degree sequence can be used as upper and lower bounds for various other graph invariants.  In some instances, these will be improvements or generalizations on known bounds, while in other instances, they will lead to new bounds entirely.

\section{The General Strategy}
Given a graph $G$ with degree sequence $D=\{d_1 \leq d_2 \leq \ldots \leq d_n\}$, our goal is to find both upper and lower bounds, connected to indices from $D$, for the size of a largest (smallest) induced subgraph having a given property $P$. Let $c(G)$ be a given graph invariant of $G$.  Now, for any subset $S \subseteq V$, let $h(\{ deg(v) | v \in S \})$ be a function of the degrees in $S$ such that for any two subsets of $V$, say $X$ and $Y$, with the same cardinality, if $\sum_{v \in X} deg(v) \geq \sum_{v \in Y} deg(v)$, then $h(\{ deg(v) | v \in X \}) \geq h(\{ deg(v) | v \in Y \})$.\\


The strategy we introduce, which we call the \textbf{Degree Sequence Index Strategy}, (DSI strategy) can now be described in the following steps.
\begin{enumerate}
 
\item Let $A(P)$ be an optimal induced subgraph of $G$ with property $P$. Identify functions $f_U(G,A(P))$ and $f_L(G,A(P))$, such that one of the following is true;
\[ h(\{ deg(v) | v \in A(P) \}) + f_U(G,A(P)) \leq c(G), \]
if an upper bound on $|A(P)|$ was intended, and
\[ h(\{ deg(v) | v \in A(P) \}) + f_L(G,A(P)) \geq c(G), \]
if a lower bound on $|A(P)|$ was intended.

\item Define the indices $DSI_U(G,h,f_U,k)$ and $DSI_L(G,h,f_L,k)$ as follows;
\[ DSI_U(G,h,f_U,c) = \max\{k \in \mathbb{Z} | h(\{ d_i | i \in \{1,2,\ldots,k\} \}) + f_U(G,A(P)) \leq c(G)\}, \]
\[ DSI_L(G,h,f_L,c) = \min\{k \in \mathbb{Z} | h(\{ d_{n-i+1} | i \in \{1,2,\ldots,k\} \}) + f_L(G,A(P)) \geq c(G)\}. \]

\item Next, make the following observations;
\[ h(\{ d_i | i \in \{1,2,\ldots,|A(P)|\} \}) + f_U(G,A(P)) \leq  h(\{ deg(v) | v \in A(P) \}) + f_U(G,A(P)) \leq c(G), \]
\[ h(\{ d_{n-i+1} | i \in \{1,2,\ldots,|A(P)|\} \}) + f_L(G,A(P)) \geq  h(\{ deg(v) | v \in A(P) \}) + f_L(G,A(P)) \geq c(G). \]
 
\item Finally, since $|A(P)|$ is an integer satisfying the definitions above, we conclude; $DSI_L(G,h,f_L,c) \leq |A(P)| \leq DSI_U(G,h,f_U,c)$, as was intended.

\item After this, the optional step would be to find more easily computable approximations to the functions $f_U$ and $f_L$ (and possibly to $c(G)$), so that, for example, the $DSI_U$ and $DSI_L$ can be found in polynomial time.
\end{enumerate}



 



As is evident, the most difficulty lies in the identification of the functions from the first step, and then in finding approximations to those functions for practicality.  In the next section, we give an example of this process, which we will elaborate on for much of the remainder of the paper. In a later section, we will give an example using a different graph property, giving some feeling for the generality of the DSI strategy.

\section{Application to Independence}

The monotonicity condition imposed on the function $h$ is suggestive, and leads to our first concrete example.  Namely, we identify $h(\{ deg(v) | v \in S \}) = \sum_{v \in S} deg(v)$.  Also, we will use the number of edges, or size, of $G$ as our graph invariant $c(G)=m(G)$.  Our property $P$ is that of being a $j$-independent set, so that $A(P)$ is a maximum $j$-independent set and we want to constrain $|A(P)|=\alpha_j(G)$.  Thus, in this section, we will apply the DSI strategy to find upper and lower bounds for the $j$-independence number. This problem is well motivated since calculating the $j$-independence number exactly is a computationally difficult problem \cite{15,14,13}. In some cases, we will show how these inequalities give improvements or generalizations on known results, or new results entirely.  Finally, we will consider the extreme cases where these newly discovered upper and lower bounds are sharp, as well as where they can be very poor approximations. 

The annihilation number of a graph was introduced by Pepper in \cite{3,4} -- where it was shown to be an upper bound on the independence number.  The characterization of equality for this upper bound was addressed in \cite{2}.  While reading the proof of this upper bound, Fajtlowicz formulated the definition presented below, recognizing that it also led to an upper bound on the independence number.  In \cite{5,4}, Pepper shows that the original definition is equivalent to the one presented below -- which for simplicity, and relevance to this paper, is the only one we give.

\begin{defn}\label{ann_def}
Let $D=\{d_1 \leq d_2 \leq \ldots \leq d_n\}$ be the degree sequence of a graph $G$.  The \textit{annihilation number} of $G$, denoted $a=a(G)$, can be defined by the equation: 
\[
a(G) = \max \{ k \in Z | \sum_{i=1}^k d_i \leq m(G) \}.
\]
\end{defn}

Since the sum of the first $\lfloor \frac{n}{2} \rfloor$ terms in $D$ is clearly at most $m(G)$, it is apparent from the definition above that $a(G) \geq \lfloor \frac{n}{2} \rfloor$.

\begin{thm}\label{alpha_ann}\cite{3,4,5}
For any graph $G$, $\alpha(G) \leq a(G)$.
\end{thm}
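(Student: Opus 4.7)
The plan is a direct application of the DSI strategy from the previous section, specialized to $h(S) = \sum_{v \in S} \deg(v)$, graph invariant $c(G) = m(G)$, property $P$ of being an independent set, and $f_U \equiv 0$. So the only content-bearing step is the inequality required by Step 1 of DSI.

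First I would establish that key inequality: if $I \subseteq V$ is any independent set of $G$, then
\[
\sum_{v \in I} \deg(v) \leq m(G).
\]
This is immediate from independence: since no edge of $G$ has both endpoints in $I$, every edge counted on the left-hand side has exactly one endpoint in $I$ and is therefore counted exactly once; in particular the sum cannot exceed the total number of edges $m(G)$.

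Next I would invoke the elementary monotonicity of sorted sums: because $D$ is written in non-decreasing order, the sum $d_1 + d_2 + \cdots + d_k$ is the smallest possible sum of degrees over any $k$-subset of $V$. Applying this with $k = \alpha(G)$ to a maximum independent set $I$ gives
\[
\sum_{i=1}^{\alpha(G)} d_i \;\leq\; \sum_{v \in I} \deg(v) \;\leq\; m(G).
\]
Hence $\alpha(G)$ is one of the integers $k$ satisfying $\sum_{i=1}^{k} d_i \leq m(G)$, and from Definition~\ref{ann_def} (which takes the maximum such $k$) we conclude $\alpha(G) \leq a(G)$.

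There is essentially no obstacle here; the argument is a one-line application of DSI once one observes that independence turns the degree sum into an edge-count with each edge counted at most once. The only potentially subtle point is conceptual rather than technical: the bound $\sum_{v \in I} \deg(v) \leq m(G)$ is generally not tight, which signals that the generalizations and refinements developed later in the paper should arise by replacing $f_U \equiv 0$ with a sharper correction term.
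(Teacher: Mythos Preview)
Your proof is correct and follows essentially the same approach the paper indicates: the paper does not give a separate formal proof of Theorem~\ref{alpha_ann} (it is cited from \cite{3,4,5}), but explicitly remarks that it is the special case of the DSI strategy with $f_U \equiv 0$, $h$ the degree sum, $c(G)=m(G)$, and $P$ the property of being independent---precisely the specialization you carry out. Your key inequality $\sum_{v\in I}\deg(v)\leq m(G)$ is the $j=1$, $m_1=0$ instance of the decomposition $\sum_{v\in I}\deg(v)=2m_1+m_3$ used in the paper's proof of the more general Theorem~\ref{main}.
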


To see that the definition and theorem above are a special case of the DSI strategy, notice that we would just make the identifications, $f_U(G,A(P))=0$ and $DSI_U(G,h,f_U,c)=a(G)$, while letting $P$ be the property of being an independent set.

Our first new application of the DSI strategy is a generalization and improvement of Theorem \ref{alpha_ann}, as well as a new and analogous lower bound.

\begin{defn}\label{upper_lower}
Let $D=\{d_1 \leq d_2 \leq \ldots \leq d_n\}$ be the degree sequence of a graph $G=(V,E)$ and let $F$ denote the family of all maximum $j$-independent sets in $G$.  The \textit{upper $j$-annihilation number} of $G$, denoted $a_j=a_j(G)$, can be defined by the equation: 
\[
a_j(G) = \max \{ k \in Z | \sum_{i=1}^k d_i + \max_{S \in F} \{m[V-S]-m[S] \} \leq m(G) \}.
\]
The \textit{lower $j$-annihilation number} of $G$, denoted $c_j=c_j(G)$, can be defined by the equation: 
\[
c_j(G) = \min \{ k \in Z | \sum_{i=1}^k d_{n-i+1} + \min_{S \in F} \{m[V-S]-m[S] \} \geq m(G) \}.
\]
\end{defn}

The main result of this section now follows.

\begin{thm}\label{main}
For any positive integer $j$ and for any graph $G=(V,E)$; 
\[
c_j(G) \leq \alpha_j(G) \leq a_j(G).
\]
\end{thm}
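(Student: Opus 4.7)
The plan is to prove both inequalities by instantiating the DSI strategy laid out in Section~2 with the obvious edge-counting identity, then letting the extremal set over the family $F$ do the book-keeping.

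The key identity: for any $S \subseteq V$, partition the edges of $G$ into those inside $S$, those inside $V-S$, and those crossing the cut. Counting $\sum_{v\in S}\deg(v)$ weights inside edges of $S$ twice and cut edges once, so
\[
\sum_{v\in S}\deg(v) \;=\; 2m[S] + \bigl(m(G)-m[S]-m[V-S]\bigr),
\]
which rearranges to
\[
\sum_{v\in S}\deg(v) \;+\; \bigl(m[V-S]-m[S]\bigr) \;=\; m(G).
\]
This is exactly the ``master equation'' that DSI needs; note it uses nothing about $S$ beyond $S\subseteq V$, so in the DSI template we can take $h(\{\deg(v):v\in S\})=\sum_{v\in S}\deg(v)$ and $f_U(G,S)=f_L(G,S)=m[V-S]-m[S]$, with the target invariant $c(G)=m(G)$.

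For the upper bound $\alpha_j(G)\le a_j(G)$: let $S^\star\in F$ attain $\max_{S\in F}\{m[V-S]-m[S]\}$. Since $|S^\star|=\alpha_j(G)$ and $d_1\le d_2\le\cdots\le d_n$, the $\alpha_j(G)$ smallest entries of $D$ are no larger than any other $\alpha_j(G)$ degrees, so
\[
\sum_{i=1}^{\alpha_j(G)} d_i \;\le\; \sum_{v\in S^\star}\deg(v).
\]
Substituting into the master equation for $S^\star$ yields
\[
\sum_{i=1}^{\alpha_j(G)} d_i \;+\; \max_{S\in F}\{m[V-S]-m[S]\} \;\le\; m(G),
\]
so $\alpha_j(G)$ is a legal value of $k$ in Definition~\ref{upper_lower}, and hence $\alpha_j(G)\le a_j(G)$ by the maximality in the definition of $a_j$.

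For the lower bound $c_j(G)\le\alpha_j(G)$: run the symmetric argument. Let $S^{\star\star}\in F$ attain $\min_{S\in F}\{m[V-S]-m[S]\}$. The $\alpha_j(G)$ largest entries of $D$ dominate the degrees in any size-$\alpha_j(G)$ set, giving
\[
\sum_{i=1}^{\alpha_j(G)} d_{n-i+1} \;\ge\; \sum_{v\in S^{\star\star}}\deg(v),
\]
and combining with the master equation for $S^{\star\star}$ produces
\[
\sum_{i=1}^{\alpha_j(G)} d_{n-i+1} \;+\; \min_{S\in F}\{m[V-S]-m[S]\} \;\ge\; m(G),
\]
so $\alpha_j(G)$ is a legal value of $k$ in the definition of $c_j(G)$, which is the minimum such $k$, yielding $c_j(G)\le \alpha_j(G)$. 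There is no real obstacle here beyond matching the direction of the extremum over $F$ with the direction of the desired inequality; the only thing one must be careful about is that the $S^\star$ (resp.\ $S^{\star\star}$) chosen to validate the $\max$ (resp.\ $\min$) in Definition~\ref{upper_lower} is itself a maximum $j$-independent set, so its cardinality is precisely $\alpha_j(G)$ and the partial degree sums align.
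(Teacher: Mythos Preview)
Your argument is correct and matches the paper's proof essentially line for line: both pick a maximum $j$-independent set realizing the relevant extremum over $F$, use the edge-partition identity $\sum_{v\in S}\deg(v)+(m[V-S]-m[S])=m(G)$, compare $\sum_{v\in S}\deg(v)$ with the appropriate tail of the sorted degree sequence, and invoke the extremality in Definition~\ref{upper_lower}. The only difference is cosmetic---you isolate the ``master equation'' up front, whereas the paper writes it inline as $2m_1+m_3+m_2-m_1=m$.
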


\begin{proof}
First we prove the upper bound.  Let $I$ be a maximum $j$-independent set such that for all $S \in F$, $m[V-I]-m[I] \geq m[V-S]-m[S]$. Denote by $m_1$ the number of edges in $[I]$, by $m_2$ the number of edges in $[V-I]$, and by $m_3$ the number of edges between $I$ and $V-I$. Observe the following chain of inequalities:
\[
\sum_{i=1}^{\alpha_j} d_i + (m[V-I]-m[I]) \leq \sum_{v \in I} deg(v) + m_2-m_1 = 2m_1+m_3 + m_2-m_1 = m.
\]
Since $\alpha_j$ is an integer satisfying the condition in the definition of the upper $j$-annihilation number, and $a_j$ is the largest such integer, the upper bound is proven.

Next we prove the lower bound.  Let $I$ be a maximum $j$-independent set such that for all $S \in F$, $m[V-I]-m[I] \leq m[V-S]-m[S]$. Denote $m_1$, $m_2$, and $m_3$ as above. Observe the following chain of inequalities:
\[
\sum_{i=1}^{\alpha_j} d_{n-i+1} + (m[V-I]-m[I]) \geq \sum_{v \in I} deg(v) + m_2-m_1 = 2m_1+m_3 + m_2-m_1 = m.
\]
Since $\alpha_j$ is an integer satisfying the condition in the definition of the lower $j$-annihilation number, and $c_j$ is the smallest such integer, the lower bound is proven.
\end{proof}

To see that these results fit into the DSI strategy, note that our property $P$ is that of being a $j$-independent set and $A(P)$ is a maximum $j$-independent set (so that $|A(P)|=\alpha_j(G)$).  Moreover, our functions $f_U(G,A(P))$ and $f_L(G,A(P))$ are $\max \{m[V-S]-m[S]| S \in F \}$ and $\min \{m[V-S]-m[S]| S \in F \}$ respectively. Finally, $DSI_U(G,h,f_U,c)$ and $DSI_L(G,h,f_L,c)$ are simply the upper and lower $j$-annihilation numbers.

The quality of Theorem \ref{main} will now be discussed. First, let us consider a few examples where the upper $j$-annihilation number is an improvement on the annihilation number from Definition \ref{ann_def}.

\begin{ex}[Showing $\alpha(G)=a_1(G)<a(G)$]\label{3.5}
For positive integers $p$ and $n$ satisfying $2p+3 < n$, the families of graphs $E_p \cup K_{n-p}$ and $E_p + K_{n-p}$ are both examples where $\alpha(G)=a_1(G)<a(G)$.  In fact, we have;
\[
\alpha(E_p \cup K_{n-p})=a_1(E_p \cup K_{n-p})= p+1 < \frac{n-1}{2} \leq \lfloor \frac{n}{2} \rfloor \leq a(E_p \cup K_{n-p})
\]
and
\[
\alpha(E_p + K_{n-p})=a_1(E_p + K_{n-p})= p < \frac{n-3}{2} < \lfloor \frac{n}{2} \rfloor \leq a(E_p + K_{n-p}).
\]
\end{ex}

\begin{ex}[Showing $\alpha(G)=2=a_1(G)<a(G)$]\label{3.6}
Let $G$ be the graph obtained by adding a matching between two complete graphs with $p$ vertices. Then, if $ p \geq3$, we have;
\[
\alpha(G)=a_1(G)= 2 < p = a(G).
\]
\end{ex}

\begin{rem}
It should be mentioned here that, while the upper $j$-annihilation number is sharp for every $p \geq3$ in the graphs described in Example \ref{3.6}, none of the known upper bounds on the independence number presented in the recent survey \cite{9} are satisfied with equality for these examples.  This includes some of the more famous bounds such as $\alpha(G) \leq n(G)-\mu(G)$, as well as the bound of Cvetkovic of the minimum of the non-negative and non-positive eigenvalues of the adjacency matrix.  Thus there are examples where this new bound, the upper $1$-annihilation number is a better approximation of $\alpha_1=\alpha$ than all known upper bounds. Additionally, if $G$ is the graph of the regular dodecahedron, we have $c_1(G) = 8 = \alpha(G)$, while all of the 12 lower bounds on the independence number presented in \cite{9} return values less than that.  Hence there are also examples where the lower $1$-annihilation number is a better approximation of $\alpha_1=\alpha$ than all known lower bounds. Admittedly, in both instances, focus was on efficiently computable approximations and neither $a_j$ nor $c_j$ have this property.
\end{rem}

Next we present a theorem that shows the strengths and weaknesses of Theorem \ref{main} in its most general form.  In particular, it will show that there are graphs where equality holds throughout the theorem, while also graphs where both upper and lower bounds can be very far from the actual value of $\alpha_j(G)$. The fact that both upper and lower bounds can, for some graphs, be very poor approximations to the independence number (the $j=1$ case) is not surprising when one considers that determining $\alpha(G)$ is a well known NP-hard problem \cite{15,14,13}.  In this context, the following theorem gives more evidence that, in spite of the apparent improvement over known upper and lower bounds, the situation is still far from ideal.

\begin{thm} We will give constructive existence proofs of the following four propositions.
\begin{enumerate}
  \item There exists graphs where $c_j(G) = \alpha_j(G) = a_j(G)$.
  \item There exists graphs where $c_j(G) = \alpha_j(G)$ while $\frac{a_j(G)}{\alpha_j(G)} \to \infty$.
  \item There exists graphs where $a_j(G) = \alpha_j(G)$ while $\frac{\alpha_j(G)}{c_j(G)} \to \infty$.
  \item There exists graphs where $\alpha_j(G)-c_j(G) \to \infty$ and $\frac{a_j(G)}{\alpha_j(G)} \to \infty$.
\end{enumerate}
\end{thm}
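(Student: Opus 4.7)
My plan is to construct, for each of the four statements, an explicit family of graphs built from disjoint unions of complete graphs and independent sets. For each such family I would compute $\alpha_j$, $a_j$, and $c_j$ directly from Definition~\ref{upper_lower}, exploiting the fact that these building blocks have highly symmetric families of maximum $j$-independent sets, so that $\max_{S \in F}$ and $\min_{S \in F}$ of $m[V-S]-m[S]$ coincide and can be read off by inspection; this reduces everything to bookkeeping on the sorted degree sequence.

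For (1), I would take $G = K_n$: by $(n-1)$-regularity and $\alpha_j(K_n)=j$, the thresholds in Definition~\ref{upper_lower} pinch to give $c_j(K_n) = a_j(K_n) = j$. For (3), I would take $G_t = K_r \cup E_t$ with $r > j$ fixed and $t \to \infty$. Every maximum $j$-independent set consists of the $t$ isolated vertices together with any $j$ of the clique vertices, so $\alpha_j = j+t$; the sorted degree sequence is $0$ repeated $t$ times followed by $r-1$ repeated $r$ times, and a short computation gives $a_j = j+t$ (tight) while the top $j$ degrees alone already clear the threshold, yielding $c_j = j$. Hence $\alpha_j/c_j = (j+t)/j \to \infty$.

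For (2), the natural candidate is $G_r = K_r \cup K_{n(r)}$ with $n(r) \to \infty$; here $\alpha_j = 2j$ for all $r$, and the two-clique symmetry makes the $\max/\min$ term trivial. For $j=1$, taking $n(r)=r^3$ yields $c_1=2=\alpha_1$ and $a_1 = r$, so $a_1/\alpha_1 \to \infty$. For $j \geq 2$, however, one checks that $c_j$ falls strictly below $\alpha_j=2j$ as soon as $n(r)$ is much larger than $r$, so the parameters must be tuned in a narrow range (or a small auxiliary component attached) to force $c_j = \alpha_j$ while still letting $a_j$ grow unboundedly. This balancing is the main obstacle in the whole theorem.

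For (4), I would combine the constructions from (2) and (3): take $G_s = K_{r(s)} \cup K_{n(s)} \cup E_{t(s)}$ with, for instance, $t(s)=s$, $r(s)=s^2$, $n(s)=s^6$. The $E_{t(s)}$ component drives $\alpha_j - c_j \to \infty$ by the mechanism from (3), while the disparity between $K_{r(s)}$ and $K_{n(s)}$ drives $a_j/\alpha_j \to \infty$ by the mechanism from (2); simultaneous verification is a routine accounting using the formulas for $m[I]$, $m[V-I]$, and the sorted degree sequence, along the lines of the proof of Theorem~\ref{main}.
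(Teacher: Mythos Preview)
Your constructions for parts (1), (3), and (4) are correct and, in fact, considerably simpler than those in the paper. For (1) the paper uses a regular graph partitionable into two maximum $j$-independent sets, whereas your $K_n$ works by a one-line computation (both thresholds collapse to $k(n-1)\lessgtr j(n-1)$). For (3) the paper uses the complete split graph $E_{p^2}+K_p$, while your $K_r\cup E_t$ gives the same phenomenon more transparently. For (4) your three-component union $K_{s^2}\cup K_{s^6}\cup E_s$ does the job: one checks $c_j=j+1$, $\alpha_j=s+2j$, $a_j=s+s^2+j-1$, so both limits hold. The paper instead builds an elaborate graph $G(p,p^2,p^2,j)$ with a unique maximum $j$-independent set; your approach avoids that machinery entirely.

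The genuine gap is part (2) for $j\geq 2$, and you have correctly located it but not closed it. In $K_r\cup K_n$ with $r\leq n$, requiring $c_j=\alpha_j=2j$ forces $(2j-1)(n-1)<j(r+n-2)$, i.e.\ $(j-1)(n-1)<j(r-1)$, so $n$ is pinned to roughly $\frac{j}{j-1}r$. But then the threshold $\sum_{i=1}^k d_i\leq j(r+n-2)$ gives $a_j\leq j+\frac{j(n-1)}{r-1}\leq j+\frac{j^2}{j-1}$, which is bounded; so $a_j/\alpha_j$ cannot tend to infinity. The ``narrow tuning'' you mention is therefore a dead end, and attaching an auxiliary component does not obviously help either, because in a disjoint union of cliques and isolated vertices the maximum $j$-independent set always contains the low-degree vertices, which keeps $c_j$ small once the degree spread is large enough to inflate $a_j$.

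The paper escapes this by using a \emph{join} rather than a union: in $G=\bigl(\bigcup_{i=1}^{p}K_{p^2}\bigr)+\bigl(\bigcup_{i=1}^{p+1}K_j\bigr)$ the unique maximum $j$-independent set is the $(p+1)$ copies of $K_j$, and these are precisely the \emph{highest}-degree vertices. That inversion is what allows $c_j=\alpha_j=j(p+1)$ while the many low-degree $K_{p^2}$ vertices push $a_j\geq p^2$. To complete your proof you will need either this idea or some other mechanism that places the maximum $j$-independent set at the top of the degree sequence.
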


\begin{proof}

To prove (1), let $G$ be a regular graph whose vertices can be partitioned into two maximum $j$-independent sets. As evidence that these kind of graphs exist in general, let
\[ G = (\cup_{i=1}^{j}K_j) + (\cup_{i=1}^{j}K_j) \].
Notice that this family of graphs is regular of degree $j^2+j-1$, has $\alpha_j(G)=j^2$, and its vertices can be partitioned into two maximum $j$-independent sets. First, note that $a_j \leq \frac{n}{2}$ since the sum of the smallest $\frac{n}{2}$ terms of the degree sequence is already equal to $m(G)$, and $\max \{m[V-S]-m[S]| S \in F \} \geq 0$ since one of the two parts has at least as many edges in its induced subgraph as the other. Next, note that $c_j(G) \geq \frac{n}{2}$ since the sum of the largest $\frac{n}{2}$ terms of the degree sequence is already equal to $m(G)$, and $\min \{m[V-S]-m[S]| S \in F \} \leq 0$ since one of the two parts has at most as many edges in its induced subgraph as the other. Combining this with Theorem \ref{main},
\[
\frac{n(G)}{2} \leq c_j(G) \leq \alpha_j(G) \leq a_j(G) \leq \frac{n(G)}{2}.
\]
Thus they are all equal and the first proposition is established.

To prove (2), we denote the disjoint union of $b$ isomorphic copies of $H$ with the notation $\cup_{i=1}^b H$. Now, for given positive integers $j$ and $p$ such that $j < p^2$, we will establish the truth of the following two claims;
\[
c_j(\cup_{i=1}^{p}K_{p^2} + \cup_{i=1}^{p+1}K_j)= \alpha_j(\cup_{i=1}^{p}K_{p^2} + \cup_{i=1}^{p+1}K_j),
\]
while simultaneously, as $p \to \infty$;
\[
\frac{a_j(\cup_{i=1}^{p}K_{p^2} + \cup_{i=1}^{p+1}K_j)}{\alpha_j(\cup_{i=1}^{p}K_{p^2} + \cup_{i=1}^{p+1}K_j)} \to \infty.
\]

For ease of notation, set $G=\cup_{i=1}^{p}K_{p^2} + \cup_{i=1}^{p+1}K_j$.  First, we can see that $\alpha_j(G)=j(p+1)$. Observe that $G$ has $p^3$ vertices of degree $p^2+pj+j-1$ while it has $j(p+1)$ vertices of degree $p^3+j-1$.  When combined with the fact that $G$ has a unique maximum $j$-independent set -- the $(p+1)$ copies of $K_j$ -- this allows us to deduce all of the following;
\[
\min_{S \in F} \{m[V-S]-m[S] \} = \frac{p^3(p^2-1)}{2} - \frac{j(j-1)(p+1)}{2},
\]
\[
m(G) = \frac{p^3(p^2-1)}{2} + \frac{j(j-1)(p+1)}{2} + p^3(p+1)j,
\]
\[
\sum_{i=1}^{(p+1)j} d_{n-i+1} + \min_{S \in F} \{m[V-S]-m[S] \} = (p^3+j-1)(p+1)j + \frac{p^3(p^2-1)}{2} - \frac{j(j-1)(p+1)}{2} = m(G).
\]
Therefore, we conclude from the definition, $c_j(G)=\alpha_j(G)$, which settles the first claim.

As for the second claim, since $\alpha_j(G)=j(p+1)$, it only remains to calculate $a_j(G)$ and compare them. To this end, we observe the validity of the following chain of inequalities, as $p \to \infty$;
\[
\sum_{i=1}^{p^2} d_i + \max_{S \in F} \{m[V-S]-m[S] \} = (p^2+pj+j-1)p^2 + \frac{p^3(p^2-1)}{2} - \frac{j(j-1)(p+1)}{2} \leq m(G).
\]
This shows that $p^2$ is an integer satisfying the condition in the definition of upper $j$-annihilation number.  Hence, because $a_j(G)$ is the largest such integer, $a_j(G)\geq p^2$.  Finally, 
\[
\frac{a_j(G)}{\alpha_j(G)} \geq \frac{p^2}{j(p+1)},
\]
and the right hand side of this inequality grows arbitrarily large with $p$ for any fixed integer $j$. This completes the proof of the second proposition.

Next, to prove (3), consider complete split graphs, which are joins of complete graphs and empty graphs.  Note that these graphs are the same as those from Example \ref{3.5}. Let $p$ and $j$ be positive integers.  Then, we make the following two claims.  As $p \to \infty$, we have;
\[
\alpha_j(E_{p^2}+K_p)= a_j(E_{p^2}+K_p),
\]
while simultaneously, 
\[
\frac{\alpha_j(E_{p^2}+K_p)}{c_j(E_{p^2}+K_p)} \to \infty.
\]

For ease of notation, set $G=E_{p^2}+K_p$.  Choose $p$ large enough so that, $j < p^2$. It is clear now that $\alpha_j(G)=p^2$.  Observe that $G$ has $p^2$ vertices of degree $p$ while it has $p$ vertices of degree $p^2+p-1$.  Now, since the difference $m[V-S]-m[S]$ is maximized over the set $F$, of all maximum $j$-independent sets, when $S$ is the vertex set of $E_{p^2}$, we see that;
\[
\max_{S \in F} \{m[V-S]-m[S] \} = \frac{p(p-1)}{2},
\]
while,
\[
m(G)=\frac{p(p-1)}{2}+p^3.
\]
Therefore,
\[
\sum_{i=1}^{p^2} d_i + \max_{S \in F} \{m[V-S]-m[S] \} = p^3 + \frac{p(p-1)}{2} = m(G),
\]
from which we conclude from the definition that $a_j(G) = p^2$, which settles the first claim.

As for second claim, since we still have $\alpha_j(G)=p^2$, it only remains to calculate $c_j(G)$ and compare them.  To this end, we choose $p$ large enough so that for the given positive integer $j$, $G$ has a unique maximum $j$-independent set consisting of the vertex set of $E_{p^2}$. Then we see that;
\[
\min_{S \in F} \{m[V-S]-m[S] \} = \frac{p(p-1)}{2}.
\]
Therefore,
\[
\sum_{i=1}^{p} d_{n-i+1} + \min_{S \in F} \{m[V-S]-m[S] \} = p^3 + \frac{p(p-1)}{2} = m(G).
\]
We now conclude from the definition that $c_j(G)=p$.  Hence $\frac{\alpha_j(G)}{c_j(G)}=p \to \infty$, concluding the proof of the third proposition.

Finally, to prove (4), we establish the truths of the two claims that follow.  For positive integers $p$,$q$,$r$, and $j$, define the graph $G(p,q,r,j)$ as follows.  Starting with the disjoint union of a $K_q$ and $q$ disjoint copies of the graph $\cup_{i=1}^{p}K_{r} + \cup_{i=1}^{p+1}K_j$, associate a unique vertex of $K_q$ to each of the $q$ copies and join all vertices of each copy to this associated vertex. Now, as $p \to \infty$, we must show that;
\[
\alpha_j(G(p,p^2,p^2,j)) - c_j(G(p,p^2,p^2,j)) \to \infty,
\]
while simultaneously, 
\[
\frac{a_j(G(p,p^2,p^2,j))}{\alpha_j(G(p,p^2,p^2,j))} \to \infty.
\]

Set $G=G(p,p^2,p^2,j)$ for ease of notation.  Since we are only interested in this family of graphs when $p$ is growing arbitrarily large, we only calculate the following list of invariants for $p$ large enough so that $j < p^2$. First, since $G$ has a unique maximum $j$-independent set, we can record the following invariants;
\[
\alpha_j(G)= p^3j + p^2j,
\]
\[
m(G) = (\frac{p^3(p^2-1)}{2} + \frac{j(j-1)(p+1)}{2}+ p^3(p+1)j)p^2 + \frac{p^2(p^2-1)}{2} + p^2(p^3+j(p+1)),
\]
\[
\min_{S \in F} \{m[V-S]-m[S] \}=\max_{S \in F} \{m[V-S]-m[S] \} = \frac{p^5(p^2-1)}{2}+\frac{p^2(p^2-1)}{2} + p^5 - \frac{p^2(p+1)j(j-1)}{2}.
\]
Subtracting the third equation above from the second and simplifying, we have;
\begin{equation}\label{p6}
m(G) - \min_{S \in F} \{m[V-S]-m[S] \}=m(G) - \max_{S \in F} \{m[V-S]-m[S] \} = p^6j + p^5j +p^3j^2 + p^2j^2.
\end{equation}

Now we will record the degree sequence of $G$, using exponents on the different degrees to indicate the number of times that degree occurs in $G$,
\[
D = \{(p^2+pj+j)^{p^5},(p^3+j)^{(p+1)p^2j},(p^3+p^2+pj+j-1)^{p^2} \}.
\]
From this we observe that the sum of the largest $p^2$ degrees is less than Equation \ref{p6}, while the sum of the largest $(p^2+(p+1)p^2j)$ degrees is greater than Equation \ref{p6}.  This enables us to deduce that $p^2 < c_j(G) \leq p^2+(p+1)p^2j$.  Using this information, we derive the following;
\[
\sum_{i=1}^{p^3j+p^2j-p} d_{n-i+1} = p^2(p^3+p^2+pj+j-1) + (p^3j+p^2j-p-p^2)(p^3+j) \geq m(G) - \min_{S \in F} \{m[V-S]-m[S] \}.
\]
This shows that $(p^3j+p^2j-p)$ is an integer satisfying the condition in the definition of lower $j$-annihilation number. Since $c_j(G)$ is defined as the smallest such integer, $c_j(G) \leq p^3j+p^2j-p$.  Hence we can compare the difference between $\alpha_j(G)$ and $c_j(G)$ as follows;
\[
\alpha_j(G)-c_j(G) \geq (p^3j+p^2j) - (p^3j+p^2j-p) = p,
\]
which can be made arbitrarily large.  This completes the proof of the first claim.

Next, we observe that the smallest $p^5$ degrees of $G$ are all $(p^2+pj+j)$. Of course, the same thing is true for the smallest $p^4-p^2$ degrees, from which we get;
\[
\sum_{i=1}^{p^4-p^2} d_i = (p^4-p^2)(p^2+pj+j) \leq m(G) - \max_{S \in F} \{m[V-S]-m[S] \}.
\]
This shows that $p^4-p^2$ is an integer satisfying the condition in the definition of upper $j$-annihilation number. Since $a_j(G)$ is defined as the largest such integer, $a_j(G) \geq p^4-p^2$.  Hence, we can compare the ratio of $\alpha_j(G)$ and $a_j(G)$ as follows;
\[
\frac{a_j(G)}{\alpha_j(G)} \geq \frac{p^4-p^2}{p^3j + p^2j}, 
\]
which can be made arbitrarily large.  This establishes the second claim, completes the proof of the fourth proposition, and therefore proves the theorem.
\end{proof}

To conclude this section, we give a couple more definitions and a lemma that will be used later on in the paper. Recall the definitions of upper and lower $j$-annihilation number, where $F$ is the family of all maximum $j$-independent sets,
\[ a_j(G) = \max \{ k \in Z | \sum_{i=1}^k d_i + \max_{S \in F} \{m[V-S]-m[S] \} \leq m(G) \}, \]
\[ c_j(G) = \min \{ k \in Z | \sum_{i=1}^k d_{n-i+1} + \min_{S \in F} \{m[V-S]-m[S] \} \geq m(G) \}. \]
In \cite{2}, the authors define an annihilating set to be a set whose degree sum is at most the size.  We will borrow this language to define an \textit{upper $j$-annihilating set} to be a set $A$ with the property that,
\[ \sum_{v \in A}deg(v) + \max_{S \in F} \{m[V-S]-m[S] \} \leq m(G). \]
Analogously, we say that $A$ is a \textit{lower $j$-annihilating set} when,
\[ \sum_{v \in A}deg(v) + \min_{S \in F} \{m[V-S]-m[S] \} \geq m(G). \]
We then define a \textit{maximum upper $j$-annihilating set} to be an upper $j$-annihilating set of the largest order and a 
\textit{minimum lower $j$-annihilating set} is a lower $j$-annihilating set of the smallest order.

\begin{lem}\label{ann_set}
For any graph $G$, if $A$ is a maximum upper $j$-annihilating set, then $|A|=a_j(G)$.  That is, the order of a maximum upper $j$-annihilating set is exactly the upper $j$-annihilation number. Moreover, if $A$ is a minimum lower $j$-annihilating set, then $|A|=c_j(G)$.  That is, the order of a minimum lower $j$-annihilating set is exactly the lower $j$-annihilation number.
\end{lem}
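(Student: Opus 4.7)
The plan is to show each equality by proving two inequalities: the maximum upper $j$-annihilating set has size at most $a_j(G)$, and there exists an upper $j$-annihilating set of size exactly $a_j(G)$ (and analogously for the lower version). The key observation throughout is that because $D$ is written in non-decreasing order, for any subset $A \subseteq V$ of size $k$ we have $\sum_{v \in A} deg(v) \geq \sum_{i=1}^k d_i$ and $\sum_{v \in A} deg(v) \leq \sum_{i=1}^k d_{n-i+1}$; that is, the $k$ smallest (respectively largest) entries of $D$ give the minimum (respectively maximum) possible degree sum among $k$-vertex subsets.

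For the upper statement, first I would fix the constant $M = \max_{S \in F}\{m[V-S]-m[S]\}$, which depends only on $G$ and $j$. Suppose $A$ is any upper $j$-annihilating set with $|A|=k$. Then the monotonicity observation gives
\[
\sum_{i=1}^k d_i + M \;\leq\; \sum_{v \in A} deg(v) + M \;\leq\; m(G),
\]
so $k$ satisfies the condition in the definition of $a_j(G)$, and hence $k \leq a_j(G)$. For the reverse direction, let $A^*$ consist of $a_j(G)$ vertices realizing the smallest $a_j(G)$ entries of the degree sequence. Then $\sum_{v\in A^*} deg(v) = \sum_{i=1}^{a_j(G)} d_i$, and the definition of $a_j(G)$ immediately gives $\sum_{v\in A^*}deg(v)+M\leq m(G)$, so $A^*$ is an upper $j$-annihilating set of size exactly $a_j(G)$. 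Combining the two bounds shows that a maximum upper $j$-annihilating set has size $a_j(G)$.

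For the lower statement, I would run the symmetric argument with $m_{\min} = \min_{S \in F}\{m[V-S]-m[S]\}$. If $A$ is a lower $j$-annihilating set with $|A|=k$, then monotonicity in the other direction yields
\[
\sum_{i=1}^k d_{n-i+1} + m_{\min} \;\geq\; \sum_{v \in A} deg(v) + m_{\min} \;\geq\; m(G),
\]
so $k$ meets the defining condition for $c_j(G)$, forcing $k \geq c_j(G)$. Conversely, taking $A^{**}$ to be $c_j(G)$ vertices of largest degree produces a lower $j$-annihilating set of size $c_j(G)$, so the minimum size equals $c_j(G)$.

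There is no real obstacle here; the lemma is essentially a bookkeeping consequence of the definitions, with the only substantive ingredient being that the additive terms $M$ and $m_{\min}$ depend only on $G$ (through the family $F$ of maximum $j$-independent sets) and not on the annihilating set $A$ itself. That independence is what lets the degree-sum comparison with $\sum d_i$ (or $\sum d_{n-i+1}$) pass through cleanly; once it is noted, the two directions of each equality are immediate.
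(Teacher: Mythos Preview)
Your proof is correct and follows essentially the same approach as the paper: bound the size of any upper (respectively lower) $j$-annihilating set via the monotonicity of the degree sequence, and then exhibit a set of vertices realizing the extreme degrees to achieve equality. The only cosmetic difference is that you explicitly name the constants $M$ and $m_{\min}$ and note they are independent of $A$, which the paper uses implicitly.
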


\begin{proof}
Let $G$ be a graph with degree sequence $D=\{d_1 \leq d_2 \leq \ldots \leq d_n\}$ and let $A$ be a maximum upper $j$-annihilating set of $G$.  Now,
\[ \sum_{i=1}^{|A|}d_i \leq \sum_{v \in A} deg(v), \]
which implies
\[ \sum_{i=1}^{|A|}d_i + \max_{S \in F} \{m[V-S]-m[S] \} \leq \sum_{v \in A} deg(v) + \max_{S \in F} \{m[V-S]-m[S] \} \leq m(G). \]
Hence, $|A|$ is an integer satisfying the definition of upper $j$-annihilation while $a_j(G)$ is the largest such integer.  Consequently, $|A| \leq a_j(G)$.

On the other hand, let $B$ be a set of vertices of $G$ whose degrees are the $a_j(G)$ smallest degrees, $\{d_1,\ldots,d_{a_j}\}$. Clearly we have,
\[ \sum_{i=1}^{a_j}d_i = \sum_{v \in B} deg(v). \]
Hence, from the definition of $a_j(G)$,
\[ \sum_{v \in B} deg(v) + \max_{S \in F} \{m[V-S]-m[S] \} = \sum_{i=1}^{a_j}d_i + \max_{S \in F} \{m[V-S]-m[S] \} \leq m(G). \]
From this we conclude that $B$ is an upper $j$-annihilating set and as such, the order of $B$ is less than or equal to the order of a maximum upper $j$-annihilating set -- namely, $|B| \leq |A|$.  Therefore, since $a_j(G)=|B|$ and together with the first paragraph, this shows they are equal and proves the first part of the theorem. The second part of the theorem can be proven in a similar fashion.

\end{proof}

Of course, there could be more than one maximum upper $j$-annihilating set, but the proof shows that they all have the same order and additionally that any set of the smallest $a_j(G)$ degrees suffices to find one.

It is clear that calculating $a_j(G)$ and $c_j(G)$ is still an intractable problem, since it uses information about all maximum $j$-independent sets.  Thus, as was alluded to in the description of the DSI strategy, the next step is to find approximations to the functions $f_U(G,A(P))$ and $f_L(G,A(P))$, so that the weaker bounds can at least be computed more easily. This is done to some extent in the next section, where we also give applications of the DSI strategy when certain other features of the graph are known.

\section{Approximations and Applications}

In  this section, we first give easily computable approximations to Theorem \ref{main}.  These are presented in Definition \ref{weak_upper_lower} and Theorem \ref{weak}.  After that, we illustrate what can be gained by assuming the graph is planar.  Next, we give an application using chromatic number.  Finally, we apply the DSI strategy to claw-free graphs. 

Let us recall once again Definition \ref{upper_lower}, the upper and lower $j$-annihilation numbers of $G$;
\[
\alpha_j(G) \leq a_j(G) = \max \{ k \in Z | \sum_{i=1}^k d_i + \max_{S \in F} \{m[V-S]-m[S] \} \leq m(G) \},
\]
\[
\alpha_j(G) \geq c_j(G) = \min \{ k \in Z | \sum_{i=1}^k d_{n-i+1} + \min_{S \in F} \{m[V-S]-m[S] \} \geq m(G) \}.
\]
Our next step in the DSI strategy is to find approximations to $\max_{S \in F} \{m[V-S]-m[S] \}$ and $\min_{S \in F} \{m[V-S]-m[S] \}$ that are simpler or at least more easily computed.  In particular, we need a simpler function $f(S) \leq \max_{S \in F} \{m[V-S]-m[S] \}$, such that when substituted into the definition, we get an index at least as large as $a_j(G)$.  For the lower bound, we need to find a simpler function $g(S) \geq \min_{S \in F} \{m[V-S]-m[S] \}$, such that when substituted into the definition, we get an index at most as large as $c_j(G)$. To illustrate this idea with an example, consider the following definitions, which give easy to calculate approximations for the invariants introduced in Theorem \ref{main}.

\begin{defn}\label{weak_upper_lower}
Let $D=\{d_1 \leq d_2 \leq \ldots \leq d_n\}$ be the degree sequence of a graph $G=(V,E)$. The \textit{weak upper $j$-annihilation number} of $G$, denoted $a_j^\prime=a_j^\prime(G)$, can be defined by the equation: 
\[
a_j^\prime(G) = \max \{ k \in Z | \sum_{i=1}^k d_i - \frac{k(j-1)}{2} \leq m(G) \}.
\]
The \textit{weak lower $j$-annihilation number} of $G$, denoted $c_j^\prime=c_j^\prime(G)$, can be defined by the equation: 
\[
c_j^\prime(G) = \min \{ k \in Z | \sum_{i=1}^k d_{n-i+1} + \frac{1}{2}\sum_{i=1}^{n-k} (d_{n-i+1}-1) \geq m(G) \}.
\]
\end{defn}

\begin{thm}\label{weak}
For any positive integer $j$ and for any graph $G=(V,E)$; 
\[
c_j^\prime(G) \leq c_j(G) \leq \alpha_j(G) \leq a_j(G) \leq a_j^\prime(G).
\]
\end{thm}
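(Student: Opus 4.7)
Since Theorem \ref{main} already supplies the middle two inequalities $c_j(G) \leq \alpha_j(G) \leq a_j(G)$, the plan is to establish only the outer inequalities $a_j(G) \leq a_j'(G)$ and $c_j'(G) \leq c_j(G)$. In both cases the strategy is the same: plug the integer $k = a_j$ (respectively $k = c_j$) into the condition defining the weaker invariant, verify that the inequality still holds, and then invoke the extremal nature of $a_j'$ (largest such $k$) or $c_j'$ (smallest such $k$) to conclude.

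For $a_j \leq a_j'$, substituting $k = a_j$ reduces the task to showing that $\max_{S \in F}\{m[V-S]-m[S]\} \geq -\frac{a_j(j-1)}{2}$. I will extract this from the definition of $j$-independence itself: every $S \in F$ satisfies $\Delta([S]) \leq j-1$, so a handshake count gives $m[S] \leq \alpha_j(j-1)/2$. Hence $m[V-S]-m[S] \geq -m[S] \geq -\alpha_j(j-1)/2$ for every $S \in F$, and then the middle inequality $\alpha_j \leq a_j$ upgrades the lower bound to $-a_j(j-1)/2$.

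For $c_j' \leq c_j$, substituting $k = c_j$ reduces the task to showing that $\frac{1}{2}\sum_{i=1}^{n-c_j}(d_{n-i+1}-1) \geq \min_{S \in F}\{m[V-S]-m[S]\}$. The structural input is the maximality of $S$: if some $v \in V-S$ had no neighbor in $S$, then $S \cup \{v\}$ would still be $j$-independent, contradicting that $S$ is a maximum $j$-independent set. So every $v \in V-S$ has at least one neighbor in $S$, which sharpens the usual handshake bound to $m[V-S] \leq \frac{1}{2}\sum_{v \in V-S}(deg(v)-1)$; combining this with $m[S] \geq 0$ yields $m[V-S]-m[S] \leq \frac{1}{2}\sum_{v \in V-S}(deg(v)-1)$ for every $S \in F$.

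The step I expect to be the main obstacle is the passage from $\sum_{v \in V-S}(deg(v)-1)$ to the degree-sequence tail $\sum_{\ell=c_j+1}^n (d_\ell - 1)$. My plan is a two-step rearrangement argument: first, since the degrees are non-decreasing, the former sum (over $|V-S|=n-\alpha_j$ vertices) is dominated by the sum of the top $n-\alpha_j$ values of $deg-1$, namely $\sum_{\ell=\alpha_j+1}^n (d_\ell-1)$; second, since $c_j \leq \alpha_j$ the tail $\sum_{\ell=c_j+1}^n(d_\ell-1)$ is obtained by adjoining the extra terms $d_\ell - 1$ for $c_j < \ell \leq \alpha_j$, and these are non-negative in the relevant range (each $v \in V-S$ has positive degree, and by monotonicity of the sequence the interpolating degrees are also at least one). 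Chaining these two bounds delivers the required inequality and hence $c_j' \leq c_j$.
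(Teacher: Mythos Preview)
Your argument tracks the paper's almost exactly. For $a_j\le a_j'$ both of you use $m[S]\le \alpha_j(j-1)/2\le a_j(j-1)/2$ together with $m[V-S]\ge 0$ to obtain $\max_{S\in F}\{m[V-S]-m[S]\}\ge -a_j(j-1)/2$, and then plug $k=a_j$ into the definition of $a_j'$. For $c_j'\le c_j$ both of you invoke maximality of $S$ to ensure every $v\in V-S$ has a neighbor in $S$, deduce $m[V-S]-m[S]\le \tfrac12\sum_{v\notin S}(\deg(v)-1)$ (the paper writes this same quantity as $m_2+\tfrac12 m_3-\tfrac{n-\alpha_j}{2}$ and verifies the equivalent inequality $n-\alpha_j\le 2m_1+m_3$), then replace $\sum_{v\notin S}$ by the top $n-\alpha_j$ degrees, and finally extend the tail from $n-\alpha_j$ to $n-c_j$ terms using $c_j\le\alpha_j$.

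One caveat on that last extension: your ``monotonicity'' justification does not establish $d_\ell\ge 1$ for $c_j<\ell\le\alpha_j$, because those indices sit \emph{below} the top $n-\alpha_j$ block in the ordered sequence, and monotonicity only gives an \emph{upper} bound $d_\ell\le d_{\alpha_j+1}$ there, not a lower bound. The paper glosses over the very same point, simply asserting the inequality from $c_j\le\alpha_j$. In the presence of isolated vertices this step can genuinely fail (e.g.\ $G=E_n$), so strictly speaking both arguments implicitly require something like $\delta(G)\ge 1$ or a separate treatment of degree-zero vertices.
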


\begin{proof}
To see that $c_j^\prime(G) \leq c_j(G)$, it is enough to show;
\[
\frac{1}{2}\sum_{i=1}^{n-c_j} (d_{n-i+1}-1) \geq \min_{S \in F} \{m[V-S]-m[S] \}.
\]
With this in mind, let $A$ be a maximum $j$-independent set which realizes $\min_{S \in F} \{m[V-S]-m[S] \}$. Denote by $m_1$ the number of edges in $[A]$, by $m_2$ the number of edges in $[V-A]$, and by $m_3$ the number of edges between $A$ and $V-A$. Now we get the following three equations, which will simplify what follows;
\[
m[V-A]-m[A] = m_2 - m_1,
\]
\[
\sum_{v \notin A} deg(v) = 2m_2 + m_3.
\]
\[
\sum_{v \in A} deg(v) = 2m_1 + m_3.
\]
From Theorem \ref{main}, $c_j(G) \leq \alpha_j(G)$, so;
\[
\frac{1}{2}\sum_{i=1}^{n-c_j} (d_{n-i+1}-1) \geq \frac{1}{2}\sum_{i=1}^{n-\alpha_j} (d_{n-i+1}-1) = \frac{1}{2}\sum_{i=1}^{n-\alpha_j} d_{n-i+1} - \frac{n-\alpha_j}{2}.
\]
However, since the sum of the highest $n-\alpha_j$ degrees is at least as large as the sum of the degrees of the $n-\alpha_j$ vertices in $V-A$,
\[
\frac{1}{2}\sum_{i=1}^{n-\alpha_j} d_{n-i+1} - \frac{n-\alpha_j}{2} \geq \frac{1}{2}\sum_{v \notin A} deg(v) - \frac{n-\alpha_j}{2} = m_2 + \frac{1}{2}m_3 - \frac{n-\alpha_j}{2}.
\]
Next we observe that,
\[
m_2 + \frac{1}{2}m_3 - \frac{n-\alpha_j}{2} \geq \min_{S \in F} \{m[V-S]-m[S] = m[V-A]-m[A] = m_2 - m_1,
\]
if and only if,
\[
n-\alpha_j \leq 2m_1 + m_3 = \sum_{v \in A} deg(v).
\]
But this last equation is true since each of the $n-\alpha_j$ vertices not in $A$ has a neighbor in $A$ due to the fact that $A$ is a maximum $j$-independent set. From this we conclude that $c_j^\prime(G) \leq c_j(G) \leq \alpha_j(G)$.

On the other hand, to see that $a_j^\prime(G) \geq a_j(G)$, let $I$ be a maximum $j$-independent set such that for all $S \in F$, $m[V-I]-m[I] \geq m[V-S]-m[S]$. Now, since $\alpha_j \leq a_j$, $m[V-I] \geq 0$, and $m[I] \leq \frac{\alpha_j(j-1)}{2}$, we deduce;

\[
\sum_{i=1}^{a_j} d_i - \frac{a_j(j-1)}{2} \leq \sum_{i=1}^{a_j} d_i - \frac{\alpha_j(j-1)}{2} \leq \sum_{i=1}^{a_j} d_i + m[V-I] - m[I] \leq m(G).
\]

As $a_j^\prime(G)$ is the largest integer having this property by definition, $a_j(G) \leq a_j^\prime(G)$, completing the proof.
\end{proof}

\begin{rem}
We should note here that the weak annihilation number of $G$, $a_j^\prime(G)$, is exactly equal to the annihilation number from Definition \ref{ann_def} when $j=1$. Thus Theorem \ref{weak} is a generalization of Theorem \ref{alpha_ann}, while Theorem \ref{main} is both a generalization and an improvement. The definition of $a_j^\prime(G)$, and its relationship to $\alpha_j(G)$, was previously discovered by Pepper and Waller \cite{12}, though it was never published.
\end{rem}

The main idea of this section was to make use of the DSI strategy to create efficient approximations for NP-hard invariants that we are interested in.  As more information about the graph is known, the approximations can be made more precise.  In fact, the definition of the weak lower $j$-annihilation number does not even depend on $j$, so that $c_j^\prime(G)=c_1^\prime(G)$. 
Moreover, the weak upper $j$-annihilation number was defined without any consideration for the edges outside of a maximum $j$-independent set, even though this was part of the definition for the upper $j$-annihilation number. Some of these weaknesses can be addressed by knowing more about the structure of the graph.

\subsection{Approximations assuming planarity}

With that in mind, let us turn our attention to maximum planar graphs, that is, planar graphs $G$ such that $m(G)=3n(G)-6$.

\begin{thm}\label{planar}
Let $G$ be a maximum planar graph with minimum degree $\delta(G) \leq 5$.  Then, for any positive integer $j \leq \delta(G)$;
\[
\alpha_j(G) \leq \frac{2n(G)-4}{\delta(G)-j+1}.
\]
\end{thm}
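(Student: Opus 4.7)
The plan is to avoid going through the upper $j$-annihilation machinery directly and instead apply the DSI idea in a streamlined form suited to planarity: count edges leaving a maximum $j$-independent set, and bound that count using the bipartite-planar edge inequality rather than the general planar edge inequality.

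First I would let $I$ be a maximum $j$-independent set of $G$, so $|I| = \alpha_j(G)$. Because $I$ is $j$-independent, every $v \in I$ has at most $j-1$ neighbors in $I$, and since $\deg(v) \geq \delta(G)$, vertex $v$ has at least $\delta(G) - (j-1) = \delta(G) - j + 1$ neighbors in $V \setminus I$. Summing over $v \in I$ gives
\[
e(I, V \setminus I) \;\geq\; \alpha_j(G)\bigl(\delta(G) - j + 1\bigr),
\]
where the right side is positive by the hypothesis $j \leq \delta(G)$. This is the analog of the "degree sum" step in DSI, now applied to the $I$-to-$(V\setminus I)$ edges rather than to the total degree.

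Next I would upper bound $e(I, V \setminus I)$ using planarity. The graph $B$ on vertex set $V$ consisting of the edges between $I$ and $V \setminus I$ is a subgraph of $G$ and hence planar, but it is also bipartite (with bipartition $I$, $V \setminus I$), so each face in any plane embedding has length at least $4$. The usual Euler-formula argument then yields $|E(B)| \leq 2n(G) - 4$. This is where the hypothesis of being a maximum planar graph is actually used only as a convenient context; all we truly invoke is planarity together with bipartiteness of $B$.

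Combining the two bounds gives $\alpha_j(G)(\delta(G) - j + 1) \leq 2n(G) - 4$, and dividing by $\delta(G) - j + 1 > 0$ gives the claimed inequality. The only delicate point (not really an obstacle, just a sanity check) is to confirm that $B$ has enough vertices for the bound $2n-4$ to be valid, which is immediate whenever $\alpha_j(G) \geq 2$; the remaining case $\alpha_j(G) \leq 1$ makes the inequality trivial. Notice also that the stated condition $\delta(G) \leq 5$ is automatic for any maximum planar graph, so it plays no role in the argument.
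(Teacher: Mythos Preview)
Your proof is correct, and it rests on the same key fact the paper uses---the bipartite planar subgraph consisting of the edges between $I$ and $V\setminus I$ has at most $2n-4$ edges by Euler's formula---but you reach the conclusion by a genuinely more direct route. The paper argues via the DSI framework: it uses maximal planarity to get $m(G)=3n-6$, deduces a lower bound $\max_{S\in F}\{m[V-S]-m[S]\}\geq n-2$, and then invokes $\alpha_j\leq a_j$ from Theorem~\ref{main} to obtain $(\delta-j+1)\alpha_j + (n-2)\leq 3n-6$. You instead sandwich $e(I,V\setminus I)$ directly between $\alpha_j(\delta-j+1)$ (from the degree condition inside $I$) and $2n-4$ (from bipartite planarity), which is shorter and, as you correctly observe, shows that the bound already holds for \emph{any} planar graph, not only maximal planar ones; the hypothesis $\delta\leq 5$ is likewise redundant. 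What the paper's route buys is the illustration of how Theorem~\ref{main} and the DSI strategy recover and generalize the Caro--Roditty result, which is the expository point of the section; what your route buys is brevity and a slightly stronger statement.
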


\begin{proof}
For any maximum independent set $S$ in $G$, the number of edges between $S$ and $V-S$ is at most $2n(G)-4$, since that is true for all bipartite planar graphs by Euler's formula.  Now, because $G$ is maximum planar, $m(G) =3n(G)-6$.  Hence we have that;
\[
\max_{S \in F} \{m[V-S]-m[S] \} \geq \max_{S \in F} \{m[V-S]\} \geq (3n(G)-6)-(2n(G)-4)=n(G)-2,
\] 
where $F$ is the set of all maximum $j$-independent sets. So this gives us the approximation we need to more precisely apply the DSI strategy, in the form of the upper $j$-annihilation number.  Now, using the above inequality together with the fact that $\alpha_j(G)\leq a_j(G)$ from Theorem \ref{main}, we get;
\[
(\delta(G)-j+1) \alpha_j(G) + n(G)-2 \leq \sum_{i=1}^{\alpha_j} d_i + n(G)-2 \leq \sum_{i=1}^{a_j} d_i + \max_{S \in F} \{m[V-S]-m[S] \} \leq m(G) = 3n(G)-6.
\] 
From which we deduce our desired inequality.
\end{proof}

When $j=1$, this result becomes a theorem from a paper of Caro and Roddity \cite{10}. In that paper, examples are given showing that equality holds in Theorem \ref{planar}, when $j=1$, for infinitely many graphs and for each value of $\delta \in \{2,3,4,5\}$.  A nice corollary to Theorem \ref{planar} that is worth mentioning is below.

\begin{cor}\cite{10}
If $G$ is a maximum planar graph with $\delta(G)=5$, then;
\[
\alpha(G) \leq \frac{2n(G)-4}{5}.
\]
\end{cor}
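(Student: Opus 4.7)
The plan is to derive this corollary as an immediate specialization of Theorem \ref{planar}. First I would verify the hypotheses: since $G$ is a maximum planar graph with $\delta(G)=5 \leq 5$, it satisfies the setting of Theorem \ref{planar}, and choosing $j=1$ gives a positive integer with $j \leq \delta(G)$.

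Next, I would recall the identification $\alpha_1(G) = \alpha(G)$ noted in the introduction (since a $1$-independent set is precisely an independent set). Substituting $j=1$ and $\delta(G)=5$ into the conclusion of Theorem \ref{planar} gives
\[
\alpha(G) = \alpha_1(G) \leq \frac{2n(G)-4}{\delta(G)-1+1} = \frac{2n(G)-4}{5},
\]
which is exactly the desired bound.

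There is essentially no obstacle here, as the corollary is a direct numerical specialization. The only minor subtlety worth remarking on, though not required for the proof, is that the result is sharp: as noted in the discussion following Theorem \ref{planar}, Caro and Roddity exhibited infinite families of maximum planar graphs with $\delta = 5$ achieving equality, so no further argument or refinement is needed.
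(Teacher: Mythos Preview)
Your proposal is correct and matches the paper's approach: the corollary is stated immediately after Theorem \ref{planar} as a direct specialization with $j=1$ and $\delta(G)=5$, with no separate proof given. Your remark about sharpness also aligns with the paper's surrounding discussion.
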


To see that the inequality in Theorem \ref{planar} is sharp even for $j>1$, consider the following example.  We show here that the theorem is satisfied with equality (only asymptotically in one case) for the following pairs $(j,\delta)$: $\{(1,3),(1,4),(1,5),(2,2),(2,3),(2,4),(3,4),(3,5)\}$.  For the pairs $(3,3),(4,4),$ and $(5,5)$ equality is not possible. The cases when $j=1$ appeared in \cite{10}, though we give another example of the $(1,4)$ case below.  Examples for the other cases where equality holds are collected below.  We do not know whether the bound is sharp for some graphs for the $(j,\delta)=(2,5)$ and $(j,\delta)=(4,5)$ cases, and leave these for open problems.

\begin{ex}
First, when $j=\delta(G)=2$, a complete graph on $3$ vertices is the unique graph with the desired properties, and there are no other instances where equality can be achieved when $j = \delta(G)$.  So we restrict our attention to $j \leq \delta(G)-1$.  When $j=2$ and $\delta(G)=3$, the complete graph on $4$ vertices is the unique graph with the desired properties.
 
Now, let $G$ be the graph formed by taking a cycle on $3p$ vertices, where $p \geq 2$ is an integer,  adding a vertex $u$ inside this cycle and a vertex $v$ outside the cycle, and then joining each of $u$ and $v$ to each of the $3p$ vertices of the cycle. Observe that $m(G) = 9p = 3n(G)-6$.  Hence, $G$ is a maximum planar graph with $\delta(G)=4$.  When $j=1$, we find that,
\[ \frac{2n(G)-4}{\delta(G)-j+1} =  \frac{2n(G)-4}{4}= \frac{n(G)}{2} - 1 = \alpha_1(G) = \alpha(G). \]
Moreover, when $j=2$, we find,
\[ \frac{2n(G)-4}{\delta(G)-j+1} =  \frac{2n(G)-4}{3}= 2p = \alpha_2(G). \]
Finally, when $j=3$, we find,
\[ \frac{2n(G)-4}{\delta(G)-j+1} = n(G)-2 = 3p = \alpha_3(G). \]
Thus, we see there are infinitely many examples satisfying Theorem \ref{planar} with equality when $\delta =4$ and $1 \leq j \leq 3$.

When the $\delta(G)=5$ and $j=3$, we can construct a family of graphs for which the inequality is ``nearly" sharp, meaning different only by a small constant as $n$ grows arbitrarily large.  Let $G$ be the graph described as follows.  Let $r \geq 5$ be an integer.  Let $A=P_r$ be a path on $r$ vertices labeled $\{a_1,\ldots,a_r\}$, let $B=P_r$ be a path on $r$ vertices labeled $\{b_1,\ldots,b_r\}$, and let $C=P_{r-1}$ be a path on $r-1$ vertices labeled $\{c_1,\ldots,c_{r-1}\}$.  Draw $A$ above $C$ above $B$.  Join $a_1$ to $b_1$ and join $a_r$ to $b_r$.  For each $i \in \{1,\ldots,r-1\}$, join $c_i$ to $a_i$, $a_{i+1}$, $b_i$, and $b_{i+1}$.  Add a vertex $u$, and join it to all of the vertices of $A$, and a vertex $v$, and join it to all of the vertices of $B$.  Finally, join $a_1$ to $a_r$, join $b_1$ to $b_r$, and $a_1$ to $b_r$.  This graph is maximum planar with $\delta(G)=5$.  When $j=3$, the set $(A-a_1)\cup(B-b_r)$ is a $3$-independent set of order $2r-2$. Moreover, since $n=3r+1$
\[ \frac{2n(G)-4}{\delta(G)-j+1} =  \frac{2n(G)-4}{3} = 2r - \frac{2}{3} = \alpha_3(G) + \frac{4}{3}. \]
\end{ex}

\subsection{Approximations using chromatic number}

Next we will focus on using the chromatic number to get upper bounds on the independence number.  Before proceeding, let us define the \textit{$j$-chromatic number of $G$}, denoted by $\chi_j(G)$, as the fewest number of $j$-independent sets the vertices of $G$ can be partitioned into.  For example, when $j=1$, this is just the regular chromatic number.

\begin{thm}\label{chromatic}
For any positive integer $j$ and for any graph $G=(V,E)$;
\[
\alpha_j(G) \leq a_j(G) \leq \max \{ k \in Z | \sum_{i=1}^k d_i  + {\chi_j-1 \choose 2}  -\frac{k(j-1)}{2} \leq m(G) \} \leq a_j^\prime(G).
\]
\end{thm}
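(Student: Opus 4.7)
The first inequality is a restatement of Theorem \ref{main}, while the last inequality is immediate: since ${\chi_j-1 \choose 2} \geq 0$, adding it to the left-hand side of the constraint $\sum_{i=1}^k d_i - \frac{k(j-1)}{2} \leq m(G)$ that defines $a_j^\prime(G)$ produces a tighter constraint, so the largest index $k$ satisfying it can only be smaller. The whole content of the theorem is therefore the middle inequality; unwinding the definition, it suffices to show that the index $k = a_j(G)$ itself satisfies
\[
\sum_{i=1}^{a_j} d_i + {\chi_j-1 \choose 2} - \frac{a_j(j-1)}{2} \leq m(G).
\]
Since by definition of $a_j(G)$ we already know $\sum_{i=1}^{a_j} d_i + \max_{S \in F}\{m[V-S]-m[S]\} \leq m(G)$, it is enough to exhibit the single bound
\[
\max_{S \in F}\{m[V-S]-m[S]\} \geq {\chi_j-1 \choose 2} - \frac{a_j(j-1)}{2}.
\]

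To obtain this, I would first prove a small lemma extending the classical inequality $m(H) \geq {\chi(H) \choose 2}$ to the $j$-chromatic setting: for every graph $H$, $m(H) \geq {\chi_j(H) \choose 2}$. The argument is the standard merging argument: in any optimal $j$-coloring $V_1,\dots,V_{\chi_j(H)}$, if two classes $V_a,V_b$ had no edge between them, then $V_a \cup V_b$ would again be $j$-independent (since the induced maximum degree is the larger of those in $[V_a]$ and $[V_b]$, both strictly less than $j$), contradicting optimality; hence each of the ${\chi_j(H) \choose 2}$ pairs of classes contributes at least one edge. Applied to $H = [V-S]$ for any fixed $S \in F$, this yields $m[V-S] \geq {\chi_j([V-S]) \choose 2}$, and because appending $S$ as a new color class turns any $j$-coloring of $[V-S]$ into one of $G$, we have $\chi_j([V-S]) \geq \chi_j(G)-1$, giving $m[V-S] \geq {\chi_j-1 \choose 2}$.

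Finally, since $S$ is $j$-independent of order $\alpha_j(G)$, every vertex of $[S]$ has degree at most $j-1$, so $m[S] \leq \frac{\alpha_j(G)(j-1)}{2}$. Combining, for any $S \in F$,
\[
m[V-S] - m[S] \geq {\chi_j-1 \choose 2} - \frac{\alpha_j(G)(j-1)}{2} \geq {\chi_j-1 \choose 2} - \frac{a_j(G)(j-1)}{2},
\]
where the second step uses $\alpha_j(G) \leq a_j(G)$ from Theorem \ref{main}. Taking the maximum over $S$ gives the required bound and completes the argument. The only real obstacle is the edge-count lemma $m(H) \geq {\chi_j(H) \choose 2}$; once that is in place, the rest is substitution into the definitions.
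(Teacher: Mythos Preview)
Your argument is correct and essentially identical to the paper's: both reduce the middle inequality to showing $\max_{S\in F}\{m[V-S]-m[S]\}\ge {\chi_j-1\choose 2}-\frac{a_j(j-1)}{2}$, establish $m[V-S]\ge{\chi_j-1\choose 2}$ via the merging argument (two $j$-independent classes with no edges between them could be merged) together with $\chi_j([V-S])\ge\chi_j(G)-1$, and bound $m[S]$ using $\Delta([S])\le j-1$ and $\alpha_j\le a_j$. The only cosmetic difference is that you state the merging argument as a general lemma $m(H)\ge{\chi_j(H)\choose 2}$, whereas the paper applies it directly to $[V-A]$ for a specific maximizing $A$.
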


\begin{proof}
To prove this result, as with the earlier approximations of Theorem \ref{main}, it suffices to establish that;
\[
{\chi_j-1 \choose 2} - \frac{a_j(G)(j-1)}{2} \leq \max_{S \in F} \{m[V-S]-m[S] \},
\]
where again, $F$ is the family of all maximum $j$-independent sets.

To this end, let $A$ be a maximum $j$-independent set realizing $\max_{S \in F} \{m[V-S]-m[S] \}$.  The first point to observe is that;
\[ 
\chi_j([V-A]) \geq \chi_j(G)-1,
\]
since otherwise, because $A$ is $j$-independent, we could have found a smaller partition than $\chi_j(G)$. Now, partition $V-A$ into $\chi_j([V-A])$ $j$-independent sets.  The next point to observe is that there is at least one edge between all pairs of these sets, due to the minimality of the coloring. Consequently;
\[
m[V-A] \geq {\chi_j-1 \choose 2}.
\]  
To conclude, since we know that each vertex of $A$ is adjacent to at most $j-1$ others, and $\alpha_j(G) \leq a_j(G)$; 
\[
m[A] \leq \frac{\alpha_j(G)(j-1)}{2} \leq \frac{a_j(G)(j-1)}{2}
\]
\end{proof}

For the $j=1$ case, we have the following corollary.

\begin{cor}
For any graph $G=(V,E)$;
\[
\alpha(G) \leq \max \{ k \in Z | \sum_{i=1}^k d_i  + {\chi-1 \choose 2} \leq m(G) \}.
\]
\end{cor}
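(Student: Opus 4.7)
The plan is to observe that this corollary is simply the $j=1$ specialization of Theorem \ref{chromatic}, which has already been established. I would invoke that theorem directly with $j=1$.

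More explicitly, first I would note the translations: $\alpha_1(G) = \alpha(G)$ by the definition of $1$-independence (a $1$-independent set is an ordinary independent set), and $\chi_1(G) = \chi(G)$ since partitioning into $1$-independent sets is exactly proper coloring. Then I would observe that the correction term $\frac{k(j-1)}{2}$ appearing in Theorem \ref{chromatic} vanishes when $j=1$, since $j-1 = 0$. Substituting all of these into the middle inequality of Theorem \ref{chromatic} produces exactly the stated bound:
\[
\alpha(G) \;\leq\; \max \Bigl\{ k \in \mathbb{Z} \;\Bigl|\; \sum_{i=1}^k d_i + \binom{\chi-1}{2} \leq m(G) \Bigr\}.
\]

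There is no real obstacle here: the work has been absorbed into Theorem \ref{chromatic}, and the corollary is a direct specialization. The only thing worth mentioning in the proof is the identification $\alpha_1 = \alpha$ and $\chi_1 = \chi$, along with the vanishing of the $\frac{k(j-1)}{2}$ term, so that the reader sees immediately why the general bound collapses to this clean form in the classical case.
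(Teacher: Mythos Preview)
Your proposal is correct and matches the paper's approach exactly: the corollary is stated immediately after Theorem \ref{chromatic} as the $j=1$ case, with no separate proof given. Your explicit identifications $\alpha_1=\alpha$, $\chi_1=\chi$, and the vanishing of $\frac{k(j-1)}{2}$ are precisely what the reader is meant to observe.
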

With this corollary in hand for instance, we could get a slight improvement over the naive annihilation number upper bound for all planar graphs, where we know that $\chi(G) \leq 4$ from the Four Color Theorem.  It also gives us another way to interpret the intuitive idea that highly chromatic graphs have relatively small maximum independent sets, since high chromatic number would generally push the upper bound above lower.


\subsection{Specification to Claw-Free and $K_{1,p}$-Free Graphs}

Now we focus specifically on using the DSI strategy to find approximations to the independence number for $K_{1,p}$-free graphs (graphs which have no induced $K_{1,p}$).

\begin{thm}\label{claw}
Let $p\geq 3$ be an integer and let $G=(V,E)$ be a $K_{1,p}$-free graph with degree sequence $D=\{d_1 \leq d_2 \leq \ldots \leq d_n\}$.  Then,
\[ a_1(G) \leq w(G) = \max \{ k \in Z | \sum_{i=1}^k d_i + \frac{1}{2}\sum_{i=k+1}^n d_i - \frac{(n-k)(p-1)}{2} \leq m(G) \}. \]
\end{thm}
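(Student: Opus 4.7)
The plan is to mirror the template of Theorem \ref{weak}: verify that $a_1(G)$ satisfies the defining inequality for $w(G)$ by producing a lower bound on $\max_{S \in F} m[V-S]$ that matches the substitute used in the definition of $w$. A cosmetic simplification comes first. Since $\sum_{i=1}^n d_i = 2m(G)$, substituting $\sum_{i=k+1}^n d_i = 2m - \sum_{i=1}^k d_i$ and cancelling an $m$ from both sides shows that the defining inequality $\sum_{i=1}^k d_i + \tfrac{1}{2}\sum_{i=k+1}^n d_i - \tfrac{(n-k)(p-1)}{2} \leq m(G)$ collapses to the cleaner form $\sum_{i=1}^k d_i \leq (p-1)(n-k)$. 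So the task reduces to showing $\sum_{i=1}^{a_1} d_i \leq (p-1)(n-a_1)$.

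The $K_{1,p}$-free hypothesis enters through its familiar independent-set consequence: for any independent set $J$ in $G$ and any $v \in V - J$, the set $N(v) \cap J$ is independent inside $N(v)$, hence $|N(v) \cap J| \leq p-1$. Summing across $V - J$ gives the key inequality $\sum_{v \in J} \deg(v) = m[J, V-J] \leq (p-1)|V-J|$. Applied to a maximum independent set $I$, combined with $\sum_{v \in I} \deg(v) \geq \sum_{i=1}^{\alpha} d_i$, this already yields $\alpha(G) \leq w(G)$ as a warm-up.

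To upgrade the conclusion from $\alpha$ to $a_1$, the plan is to invoke Lemma \ref{ann_set}: the set $A$ consisting of the $a_1$ vertices of smallest degree is a maximum upper $1$-annihilating set, so $\sum_{i=1}^{a_1} d_i = \sum_{v \in A} \deg(v) \leq m - m[V-I] = m[I, V-I]$ when $I$ realizes $\max_{S \in F} m[V-S]$. Then partition $V - I = (A \setminus I) \cup ((V-A) \setminus I)$, bounding $|N(v) \cap I|$ by $\deg(v)$ for $v \in A \setminus I$ (using the low-degree structure of $A$) and by $p-1$ for $v \in (V-A) \setminus I$ (using $K_{1,p}$-freeness with $I$ independent). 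Combining with the identity $\sum_{v \in A} \deg(v) = \sum_{v \in A \cap I} \deg(v) + \sum_{v \in A \setminus I} \deg(v)$ should deliver $\sum_{i=1}^{a_1} d_i \leq (p-1)(n-a_1)$.

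The hard part will be this last bookkeeping step: the crude bound $m[I, V-I] \leq (p-1)(n-\alpha)$ alone is not sharp enough when $a_1 > \alpha$ (for example, $K_3 \cup K_7$ with $p = 3$ has $\alpha = 2 < 3 = a_1$), so the argument must genuinely exploit that the vertices in $A$ have small degree -- not merely the size $|V - I| = n - \alpha$ -- in order to shave the estimate down to $(p-1)(n-a_1)$. Getting the degree contributions from $A \setminus I$ to interact correctly with the $(p-1)$-per-vertex contribution from $(V-A) \setminus I$ is where the proof must be executed carefully.
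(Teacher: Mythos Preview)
Your algebraic simplification to $\sum_{i=1}^k d_i \le (p-1)(n-k)$ is correct, and your warm-up argument cleanly proves the corollary $\alpha(G)\le w(G)$. The gap is in the upgrade to $a_1$: the bookkeeping you defer does not close. Carrying out your split, the chain $\sum_{v\in A}\deg(v)\le m[I,V-I]\le \sum_{u\in A\setminus I}\deg(u)+(p-1)\lvert(V-A)\setminus I\rvert$ yields only $\sum_{v\in A\cap I}\deg(v)\le (p-1)\lvert(V-A)\setminus I\rvert$. To reach the target $\sum_{v\in A}\deg(v)\le (p-1)\lvert V-A\rvert$ you would still need $\sum_{v\in A\setminus I}\deg(v)\le (p-1)\lvert(V-A)\cap I\rvert$, and nothing in your toolkit delivers this. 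In your own test case $G=K_3\cup K_7$ with $p=3$, taking $A=V(K_3)$ and $I$ any maximum independent set, one finds $\sum_{v\in A\setminus I}\deg(v)=4$ while $(p-1)\lvert(V-A)\cap I\rvert=2$; the required sub-inequality is false even though the theorem's conclusion ($6\le 14$) holds there. Your decomposition discards too much slack on one side and demands too much on the other, so the plan as stated does not go through.

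For comparison, the paper's proof does not introduce a separate independent set $I$; it works directly with the maximum upper $1$-annihilating set $A$ and asserts both that every vertex of $V-A$ has at most $p-1$ neighbors in $A$ and that $\sum_{v\in A}\deg(v)+m[V-A]\le m(G)$. But the second assertion is equivalent to $m[A]=0$ (since $\sum_{v\in A}\deg(v)+m[V-A]=m(G)+m[A]$ identically), and the first likewise needs $A$ independent for the $K_{1,p}$-free argument to bite. A maximum upper $1$-annihilating set need not be independent --- in $K_3\cup K_7$ again, $A=V(K_3)$ has $m[A]=3$ and $\sum_{v\in A}\deg(v)+m[V-A]=27>24=m$. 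So the paper's route, read literally, also only certifies $\alpha(G)\le w(G)$; neither your plan nor the paper's argument actually bridges the gap from $\alpha$ to $a_1$.
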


\begin{proof}
Let $A$ be a maximum upper $1$-annihilating set of $G$. First we observe that,
\[ \sum_{i=1}^{|A|}d_i + \frac{1}{2}\sum_{i=|A|+1}^n d_i - \frac{(n-|A|)(p-1)}{2} \leq \sum_{v\in A} deg(v) + \frac{1}{2}\sum_{v \notin A} deg(v) - \frac{(n-|A|)(p-1)}{2}. \]
This is true because the weight of the first sum is $1$ while that of the second is $\frac{1}{2}$, so any deviation from the lowest $|A|$ terms of $D$ being the degrees of the vertices in $A$ would only favor the right hand side of the above inequality.  Next we observe that,
\[ \sum_{v\in A} deg(v) + \frac{1}{2}\sum_{v \notin A} deg(v) - \frac{(n-|A|)(p-1)}{2} \leq \sum_{v \in A} deg(v) + m[V-A] \leq m(G), \]
which follows from the fact
\[ \frac{1}{2}\sum_{v \notin A} deg(v) - \frac{(n-|A|)(p-1)}{2} = \frac{1}{2}\sum_{v \notin A} (deg(v)-(p-1)) \leq m[V-A] \]
because each of the $n-|A|$ vertices in $V-A$ has at most $p-1$ edges going back to $A$ since $G$ is $K_{1,p}$-free. Hence, $|A|$ is an integer satisfying the definition of $w(G)$, and since $w(G)$ is the largest such integer, we know $|A| \leq w(G)$. Finally, from Lemma \ref{ann_set}, we get that $|A| = a_1(G) \leq w(G)$ which completes the proof.
\end{proof}

Theorem \ref{claw} together with Theorem \ref{main} give us the corollary below. 

\begin{cor}\label{w}
Let $p\geq 3$ be an integer and let $G=(V,E)$ be a $K_{1,p}$-free graph with degree sequence $D=\{d_1 \leq d_2 \leq \ldots \leq d_n\}$.  Then, with $w(G)$ defined as above, $\alpha(G) \leq w(G)$.
\end{cor}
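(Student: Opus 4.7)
The plan is essentially transitive: chain together the two inequalities that are already in hand, so almost no new work is required. First, I would invoke Theorem \ref{main} in the specialization $j=1$. Since $\alpha_1(G)=\alpha(G)$ by definition of the $j$-independence number, that theorem immediately yields
\[
\alpha(G)=\alpha_1(G)\leq a_1(G).
\]
Second, I would invoke Theorem \ref{claw} directly, which under the hypothesis that $G$ is $K_{1,p}$-free with $p\geq 3$ gives
\[
a_1(G)\leq w(G).
\]
Concatenating these two inequalities produces the corollary.

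The only thing to double-check is that the hypotheses align. Theorem \ref{main} has no structural assumption on $G$ beyond being a simple finite graph, so specializing $j=1$ is legitimate. Theorem \ref{claw} requires exactly the $K_{1,p}$-free hypothesis with $p\geq 3$, which is precisely what the corollary assumes. The definition of $w(G)$ used in the corollary is the same expression displayed in Theorem \ref{claw}, so no redefinition or reinterpretation is needed.

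The main obstacle, such as it is, was already overcome inside Theorem \ref{claw}: replacing the $\max_{S\in F}\{m[V-S]-m[S]\}$ term appearing in the definition of $a_1(G)$ by the more tractable expression $\tfrac12\sum_{i=k+1}^n d_i-\tfrac{(n-k)(p-1)}{2}$, using that each vertex outside an annihilating set has at most $p-1$ neighbors inside it by the forbidden-$K_{1,p}$ structure. Once that approximation is in place, Lemma \ref{ann_set} identifies $a_1(G)$ with the order of a maximum upper $1$-annihilating set, and everything collapses to the chain above. Hence the proposed proof is a one-line composition: apply Theorem \ref{main} with $j=1$, then apply Theorem \ref{claw}.
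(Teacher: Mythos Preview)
Your proposal is correct and matches the paper's approach exactly: the paper states just before the corollary that ``Theorem \ref{claw} together with Theorem \ref{main} give us the corollary below,'' which is precisely the chain $\alpha(G)=\alpha_1(G)\leq a_1(G)\leq w(G)$ you spell out.
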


In a paper from 1992 by Faudree, Gould, Jacobson, Lesniak, and Lindquester \cite{11}, it is established that, for $K_{1,p}$-free graphs of order $n(G)$ with minimum degree $\delta(G)$ and independence number $\alpha(G)$,
\[ \alpha(G) \leq \frac{(p-1)n(G)}{\delta(G)+p-1}. \]
It turns out that this result follows from Corollary \ref{w}, and hence also from Theorems \ref{main} and \ref{claw}.

\begin{cor}
Let $p\geq 3$ be an integer and let $G=(V,E)$ be a $K_{1,p}$-free graph with degree sequence $D=\{d_1 \leq d_2 \leq \ldots \leq d_n\}$.  Then, with $w(G)$ defined as above, 
\[ \alpha(G) \leq a_1(G) \leq w(G) \leq \frac{(p-1)n(G)}{\delta(G)+p-1}. \]
\end{cor}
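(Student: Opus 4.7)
The first two inequalities, $\alpha(G) \leq a_1(G) \leq w(G)$, are already given by Corollary \ref{w} (via Theorem \ref{main} and Theorem \ref{claw}), so the only thing left to prove is the last inequality $w(G) \leq \frac{(p-1)n(G)}{\delta(G)+p-1}$. My plan is to first strip the definition of $w(G)$ of its ``cosmetic'' terms and then apply the trivial bound $d_i \geq \delta(G)$.

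The first step is to simplify the condition defining $w(G)$. Writing $m(G) = \tfrac{1}{2}\sum_{i=1}^n d_i = \tfrac{1}{2}\sum_{i=1}^k d_i + \tfrac{1}{2}\sum_{i=k+1}^n d_i$ and substituting into
\[ \sum_{i=1}^k d_i + \tfrac{1}{2}\sum_{i=k+1}^n d_i - \tfrac{(n-k)(p-1)}{2} \leq m(G), \]
the half-sum over the top indices cancels from both sides, leaving the equivalent (and much cleaner) condition
\[ \sum_{i=1}^k d_i \leq (n-k)(p-1). \]
Thus $w(G)$ can just as well be characterized as the largest integer $k$ satisfying this simpler inequality.

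The second step is to feed in the minimum-degree information. Applying the previous inequality with $k = w(G)$ and bounding $\sum_{i=1}^{w(G)} d_i$ from below by $\delta(G)\cdot w(G)$ (using that each $d_i \geq \delta(G)$), we obtain $\delta(G)\cdot w(G) \leq (n(G)-w(G))(p-1)$, which rearranges directly to
\[ w(G)\bigl(\delta(G)+p-1\bigr) \leq (p-1)\,n(G), \]
and hence to the claimed bound on $w(G)$.

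I do not anticipate any real obstacle here; the work already went into Theorem \ref{claw}. The only thing one has to notice is that subtracting $m(G) = \tfrac{1}{2}\sum d_i$ collapses the seemingly complicated defining expression of $w(G)$ into a condition comparing $\sum_{i=1}^k d_i$ with $(n-k)(p-1)$, after which the bound of Faudree, Gould, Jacobson, Lesniak and Lindquester falls out from the trivial estimate $d_i \geq \delta(G)$.
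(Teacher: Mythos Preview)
Your proof is correct and follows essentially the same route as the paper: both arguments use $2m(G)=\sum_i d_i$ to reduce the defining inequality for $w(G)$ to $\sum_{i=1}^{w} d_i \leq (n-w)(p-1)$, and then bound the left side below by $\delta\, w$ to finish. The only difference is cosmetic ordering---you simplify the condition before specializing to $k=w(G)$, while the paper specializes first and then simplifies.
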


\begin{proof}
The first two inequalities in the chain have already been established, so it remains to show the final inequality.  To start, we know from the definition of $w(G)$ that
\[ m(G) \geq \sum_{i=1}^{w}d_i + \frac{1}{2}\sum_{i=w+1}^n d_i - \frac{(n-w)(p-1)}{2}. \]
Hence it is also true that,
\[ 2m(G) \geq 2\sum_{i=1}^{w}d_i + \sum_{i=w+1}^n d_i - (n-w)(p-1), \]
which is equivalent to,
\[ 2m(G)+(n-w)(p-1) \geq \sum_{i=1}^n d_i + \sum_{i=1}^{w}d_i. \]
This last inequality is equivalent to,
\[ (n-w)(p-1) \geq \sum_{i=1}^{w}d_i. \]
Finally, since
\[ \sum_{i=1}^{w}d_i \geq \delta w, \]
we arrive at
\[ (n-w)(p-1) \geq \delta w, \]
which, after rearranging, yields our desired inequality, completing the proof.
\end{proof}

We can use the DSI strategy as well to generalize this result from independence to $j$-independence.  We present a shorter direct proof below.

\begin{thm}
Let $j$ and $p\geq 3$ be integers and let $G=(V,E)$ be a $K_{1,p}$-free graph with minimum degree $\delta(G) \geq j-1$. Then, 
\[ \alpha_j(G) \leq \frac{j(p-1)n(G)}{j(p-1)+\delta(G)-(j-1)}. \]
\end{thm}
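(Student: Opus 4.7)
The plan is a direct double-counting of the edges between a maximum $j$-independent set $I$ and its complement $V \setminus I$, exploiting the $K_{1,p}$-free hypothesis to bound the contribution from one side and the $j$-independence to bound the other.

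Let $I$ be a maximum $j$-independent set, so $|I| = \alpha_j(G)$ and the induced subgraph $[I]$ has maximum degree at most $j-1$. Let $e(I, V\setminus I)$ denote the number of edges with exactly one endpoint in $I$. For the lower bound on $e(I, V\setminus I)$, I would use the simple observation that each $v \in I$ contributes $\deg(v) - \deg_{[I]}(v) \geq \delta(G) - (j-1)$ such edges, where the hypothesis $\delta(G) \geq j-1$ makes this nonnegative. Summing, I get
\[
e(I, V\setminus I) \geq \alpha_j(G)\bigl(\delta(G) - (j-1)\bigr).
\]

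For the upper bound, I need to show that every $u \in V \setminus I$ satisfies $|N(u) \cap I| \leq j(p-1)$; the rest is routine. This is the main step. The argument is as follows: the vertices $N(u) \cap I$ induce a subgraph of $[I]$, which has maximum degree at most $j-1$. A standard greedy (or Caro--Wei type) argument shows any graph on $t$ vertices with maximum degree at most $j-1$ has an independent set of size at least $\lceil t/j \rceil$. If $|N(u) \cap I| \geq j(p-1)+1$, then $N(u) \cap I$ contains an independent set of size at least $p$, and together with $u$ this induces a $K_{1,p}$ in $G$, contradicting the hypothesis. Hence $|N(u) \cap I| \leq j(p-1)$ for every $u \notin I$, giving
\[
e(I, V\setminus I) \leq j(p-1)\bigl(n(G) - \alpha_j(G)\bigr).
\]

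Combining the two inequalities,
\[
\alpha_j(G)\bigl(\delta(G) - (j-1)\bigr) \leq j(p-1)\bigl(n(G) - \alpha_j(G)\bigr),
\]
and solving for $\alpha_j(G)$ yields the claimed bound
\[
\alpha_j(G) \leq \frac{j(p-1)\, n(G)}{j(p-1) + \delta(G) - (j-1)}.
\]
The only nontrivial point is the $K_{1,p}$-free argument in the upper bound; the rest is bookkeeping. I expect this to be the main obstacle only in the sense of correctly invoking the max-degree-to-independence-number inequality in $[I]$; once that is in hand, the conclusion is immediate.
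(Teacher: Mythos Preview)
Your proof is correct and follows essentially the same approach as the paper: both double-count the edges between a maximum $j$-independent set and its complement, bounding from below via the minimum degree and the $j$-independence condition, and from above via the $K_{1,p}$-free hypothesis combined with the greedy/Caro--Wei bound $\alpha(H) \geq |H|/(\Delta(H)+1)$ applied to the neighborhood in $I$ of a vertex outside. The only cosmetic difference is notation.
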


\begin{proof}
Let $A$ be a maximum $j$-independent set and denote by $m(A,V-A)$ the number of edges with one vertex in $A$ and the other vertex in $V-A$.  Since the maximum degree in $[A]$ is at most $j-1$, each vertex of $A$ has at least $\delta(G) -(j-1)$ neighbors in $V-A$. Hence,
\begin{equation}\label{first equation}
m(A,V-A) \geq |A|(\delta(G)-(j-1)) = \alpha_j(G)(\delta(G)-(j-1)).
\end{equation}

On the other hand, suppose there is a vertex $u \in V-A$ which has at least $j(p-1)+1$ neighbors in $A$.  Let $N(u)$ denote the neighbors of $u$ in $A$.  Consider the subgraph induced by $N(u)$, which we will denote by $[N(u)]$. Since $\Delta([N(u)]) \leq j-1$, there must be an independent set in $[N(u)]$ of size at least,
\[ \frac{|N(u)|}{\Delta([N(u)])+1} \geq \frac{|N(u)|}{(j-1)+1} = \frac{|N(u)|}{j} \geq \frac{j(p-1)+1}{j} = p - \frac{j-1}{j}. \]
This means that, since the independence number is an integer, $\alpha([N(u)]) \geq p$.  However, this is a contradiction since $G$ was $K_{1,p}$-free. Therefore, every vertex in $V-A$ has at most $j(p-1)$ neighbors in $A$. From this we deduce that,
\begin{equation}\label{second equation}
m(A,V-A) \leq j(p-1)(n(G)- \alpha_j(G)).
\end{equation}
Now combining Equations \ref{first equation} and \ref{second equation}, we have;
\[ \alpha_j(G)(\delta(G)-(j-1)) \leq j(p-1)(n(G)- \alpha_j(G)). \]
Solving this last inequality for $\alpha_j(G)$, we reach our desired conclusion.

\end{proof}

\section{Other Applications of the DSI Strategy}

As another example of the DSI strategy, we observe that similar treatment could be given for $j$-domination number. As was stated in the introduction, researching the $j$-domination number is very popular and some examples are \cite{29,27,20,30}.  Meanwhile, there is some strong relationships between the $j$-domination and $j$-independence numbers as seen for instance in \cite{16,24,18,17}. 

\begin{defn}\label{upper_lower_2}
Let $D=\{d_1 \leq d_2 \leq \ldots \leq d_n\}$ be the degree sequence of a graph $G=(V,E)$ and let $F$ denote the family of all minimum $j$-dominating sets in $G$.  We define the following two graph invariants;
\[ z_j(G) = \max \{ k \in Z | \sum_{i=1}^k d_i + \max_{S \in F} \{m[V-S]-m[S] \} \leq m(G) \}, \]
\[ w_j(G) = \min \{ k \in Z | \sum_{i=1}^k d_{n-i+1} + \min_{S \in F} \{m[V-S]-m[S] \} \geq m(G) \}. \]
\end{defn}

Now we have an analog to Theorem \ref{main} with respect to the $j$-domination number.

\begin{thm}\label{main2}
For any positive integer $j$ and for any graph $G=(V,E)$; 
\[
w_j(G) \leq \gamma_j(G) \leq z_j(G).
\]
\end{thm}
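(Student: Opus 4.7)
The plan is to mirror the proof of Theorem \ref{main} essentially verbatim. The crucial observation is that the identity exploited there,
\[ \sum_{v \in S} deg(v) + (m[V-S] - m[S]) = 2m[S] + m(S,V-S) + m[V-S] - m[S] = m(G), \]
holds for \emph{every} subset $S \subseteq V$—the $j$-independence of $S$ was never actually used in that calculation. So the same arithmetic goes through with $F$ reinterpreted as the family of minimum $j$-dominating sets.

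For the upper bound $\gamma_j(G) \leq z_j(G)$, I would pick $D \in F$ realizing $\max_{S \in F}\{m[V-S] - m[S]\}$, set $m_1 = m[D]$, $m_2 = m[V-D]$, and let $m_3$ be the number of edges between $D$ and $V-D$. Since $\sum_{i=1}^{\gamma_j} d_i \leq \sum_{v \in D} deg(v)$ (the smallest $\gamma_j$ terms of the degree sequence sum to at most the degree sum over any $\gamma_j$ vertices), the chain
\[ \sum_{i=1}^{\gamma_j} d_i + \max_{S \in F}\{m[V-S]-m[S]\} \leq \sum_{v \in D} deg(v) + m_2 - m_1 = 2m_1 + m_3 + m_2 - m_1 = m(G) \]
shows that $\gamma_j$ is an integer satisfying the condition defining $z_j$. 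Since $z_j$ is the \emph{largest} such integer, $\gamma_j \leq z_j$ follows.

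For the lower bound $w_j(G) \leq \gamma_j(G)$, I would symmetrically pick $D \in F$ realizing the \emph{minimum} of $m[V-S]-m[S]$, and use $\sum_{i=1}^{\gamma_j} d_{n-i+1} \geq \sum_{v \in D} deg(v)$ to reverse the inequality in the chain, producing
\[ \sum_{i=1}^{\gamma_j} d_{n-i+1} + \min_{S \in F}\{m[V-S]-m[S]\} \geq m(G), \]
so that $\gamma_j$ meets the condition defining $w_j$, which is the \emph{smallest} such integer, giving $w_j \leq \gamma_j$.

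There is no real obstacle—this is exactly the sort of payoff the DSI framework was designed to deliver. Notice that the argument uses no structural property of $j$-domination whatsoever; it only uses that $F$ is a distinguished family of vertex subsets of common cardinality $\gamma_j(G)$. The genuine work, as elsewhere in the paper, will come later when one tries to replace $\max$ and $\min$ over $F$ by computable approximations that exploit domination-specific structure (for instance, the fact that every vertex of $V-D$ has at least $j$ neighbors in $D$, which bounds $m_3$ from below by $j(n-\gamma_j)$).
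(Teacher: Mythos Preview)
Your proof is correct and essentially identical to the paper's own argument: both pick a minimum $j$-dominating set $D$ realizing the appropriate extremum of $m[V-S]-m[S]$, decompose the degree sum as $2m_1+m_3$, and conclude via the defining extremality of $z_j$ and $w_j$. Your additional remarks---that the identity holds for any subset and that no domination-specific structure is used---are accurate and anticipate the paper's subsequent approximation step.
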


\begin{proof}
First we prove the upper bound.  Let $D$ be a minimum $j$-dominating set such that for all $S \in F$, $m[V-D]-m[D] \geq m[V-S]-m[S]$. Denote by $m_1$ the number of edges in $[D]$, by $m_2$ the number of edges in $[V-D]$, and by $m_3$ the number of edges between $D$ and $V-D$. Observe the following chain of inequalities:
\[
\sum_{i=1}^{\gamma_j} d_i + (m[V-D]-m[D]) \leq \sum_{v \in D} deg(v) + m_2-m_1 = 2m_1+m_3 + m_2-m_1 = m.
\]
Since $\gamma_j$ is an integer satisfying the condition in Definition \ref{upper_lower_2} above, and $z_j$ is the largest such integer, the upper bound is proven.

Next we prove the lower bound.  Let $D$ be a minimum $j$-dominating set such that for all $S \in F$, $m[V-D]-m[D] \leq m[V-S]-m[S]$. Denote $m_1$, $m_2$, and $m_3$ as above. Observe the following chain of inequalities:
\[
\sum_{i=1}^{\gamma_j} d_{n-i+1} + (m[V-D]-m[D]) \geq \sum_{v \in D} deg(v) + m_2-m_1 = 2m_1+m_3 + m_2-m_1 = m.
\]
Since $\gamma_j$ is an integer satisfying the condition in Definition \ref{upper_lower_2} above, and $w_j$ is the smallest such integer, the lower bound is proven.
\end{proof}

From this new starting point, we can repeat some of the same ideas we had for $j$-independence number.  Namely, try to approximate it in some computationally efficient way, make some structural assumptions to see what more can be said under certain conditions, and compare to known results about $j$-domination number. To give just one example of such endeavors, while choosing to leave the rest for future work, consider the following theorem.

\begin{thm}
For any positive integer $j$ and for any graph $G=(V,E)$; 
\[
\gamma_j(G) \geq w^{\prime}_j(G) = \min \{ k \in Z | \sum_{i=1}^k d_{n-i+1} + \frac{1}{2}\sum_{i=k+1}^n (d_{n-i+1}-j) \geq m(G) \}.
\]
\end{thm}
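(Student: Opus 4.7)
The plan is to show that $k = \gamma_j(G)$ itself satisfies the inequality that defines $w^\prime_j(G)$; since $w^\prime_j(G)$ is defined as the smallest integer satisfying that inequality, this will immediately give $w^\prime_j(G) \leq \gamma_j(G)$, as desired. This mirrors the direct approach used in Theorem \ref{main2}, but avoids routing the argument through $w_j(G)$ and the minimum of $m[V-S]-m[S]$ over all minimum $j$-dominating sets.

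First I would unfold the target inequality. Writing $k=\gamma_j$, the term $\sum_{i=1}^{k} d_{n-i+1}$ is the sum $S_1$ of the $k$ largest entries of the degree sequence, and $\sum_{i=k+1}^{n} d_{n-i+1}$ is the sum $S_2$ of the remaining $n-k$ entries. Since $S_1+S_2=2m(G)$, the defining inequality
\[
S_1 + \tfrac{1}{2}\bigl(S_2 - (n-k)j\bigr) \geq m(G)
\]
is equivalent, after substituting $S_2 = 2m(G)-S_1$ and simplifying, to the single condition
\[
S_1 \;\geq\; j(n-k).
\]
So the whole theorem reduces to showing that the sum of the $\gamma_j(G)$ largest degrees is at least $j(n-\gamma_j(G))$.

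Next I would verify this bound using the definition of a $j$-dominating set. Fix any minimum $j$-dominating set $D$, so $|D|=\gamma_j$, and let $m_3$ denote the number of edges between $D$ and $V-D$. Because each vertex in $V-D$ has at least $j$ neighbors in $D$, counting the $D$-endpoints of such edges gives $m_3 \geq j(n-\gamma_j)$. Hence
\[
\sum_{v\in D} \deg(v) \;\geq\; m_3 \;\geq\; j(n-\gamma_j),
\]
and the sum of the $\gamma_j$ largest degrees in $D(G)$ is at least $\sum_{v\in D}\deg(v)$, yielding $S_1 \geq j(n-\gamma_j)$.

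Combining the two steps shows that $k = \gamma_j(G)$ satisfies the condition in the definition of $w^\prime_j(G)$, so $w^\prime_j(G) \leq \gamma_j(G)$. The only real content is the $j$-domination edge count; the rest is the algebraic identity $S_1+S_2=2m(G)$. I don't expect any serious obstacle, but the cleanest presentation is probably to collapse the inequality to $S_1 \geq j(n-k)$ first, so the reader sees exactly why the $j$-dominating property is what makes everything work.
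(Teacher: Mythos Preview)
Your proof is correct and follows essentially the same approach as the paper: both show that $k=\gamma_j(G)$ satisfies the defining inequality by reducing it, via the edge count for a minimum $j$-dominating set $D$, to $\sum_{v\in D}\deg(v)\geq j(n-\gamma_j)$. Your preliminary simplification of the defining inequality to the equivalent condition $S_1\geq j(n-k)$ using $S_1+S_2=2m(G)$ is a slightly cleaner presentation of the same algebra the paper carries out after passing through the intermediate expression $\sum_{v\in D}\deg(v)+\tfrac{1}{2}\sum_{v\notin D}(\deg(v)-j)$.
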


\begin{proof}
Let $D$ be a minimum $j$-dominating set and consider the following simplifying notation; $m[D]=m_1$, $m[V-D]=m_2$, and $m(D,V-D)=m_3$. To prove the theorem, we verify that $\gamma_j(G)$ is an integer satisfying the condition in the definition of $w^{\prime}_j(G)$, which is itself the smallest such integer.  To this end, it suffices to show;
\[
\sum_{i=1}^{\gamma_j} d_{n-i+1} + \frac{1}{2}\sum_{i=\gamma_j+1}^n (d_{n-i+1}-j) \geq \sum_{v \in D} deg(v) + \frac{1}{2}\sum_{v \in V-D} (deg(v)-j) \geq m(G).
\]
The first inequality above is true because any degree in $D$ not among the highest $\gamma_j(G)$ degrees in $G$ is counted with a weight of $1$ on the left but only a weight of $\frac{1}{2}$ on the right. To see that the second inequality above is true, notice that,
\[
\sum_{v \in D} deg(v) + \frac{1}{2}\sum_{v \in V-D} (deg(v)-j) = 2m_1 + m_3 + m_2 + \frac{m_3}{2} - \frac{j(n-\gamma_j)}{2}.
\]
However, since $m(G)=m_1 + m_2 + m_3$, the second inequality is true if and only if,
\[
m_1 + \frac{m_3}{2} \geq \frac{j(n-\gamma_j)}{2},
\]
or equivalently,
\[
\sum_{v \in D} deg(v) = 2m_1 + m_3 \geq j(n - \gamma_j).
\]
Finally, this last inequality is true since each of the $(n - \gamma_j)$ vertices from $V-D$ have at least $j$ neighbors in $D$ because it is a $j$-dominating set.

\end{proof}

\section{Final Remarks}

To conclude, the main goal of our paper was to present the DSI strategy and give some examples of how it could be used to derive approximations for computationally difficult graph invariants.  We showed how, as more information is known about the graph, stronger results can be obtained -- and we gave some examples of how this is done.  The authors hope there will be many other instances where the DSI strategy can be used to get new results or give deeper insight to known results. In particular, we primarily focused our studies on $k$-independence number, while leaving mostly unexplored the applications of DSI to the last parts of the paper dealing with $k$-domination number. Finally, we left open the question of whether or not equality can be obtained in Theorem \ref{planar} for the $(j,\delta)=(2,5)$ and $(j,\delta)=(4,5)$ cases.

\end{document}